\newcommand{\bd}{\begin{description}}
\newcommand{\ed}{\end{description}}
\newcommand{\bi}{\begin{itemize}}
\newcommand{\ei}{\end{itemize}}
\newcommand{\be}{\begin{enumerate}}
\newcommand{\ee}{\end{enumerate}}
\newcommand{\beq}{\begin{equation}}
\newcommand{\eeq}{\end{equation}}
\newcommand{\beqs}{\begin{eqnarray*}}
\newcommand{\eeqs}{\end{eqnarray*}}
\newcommand{\ceil}[1]{\left\lceil #1 \right\rceil}
\definecolor{DarkGreen}{rgb}{0.2, 0.6, 0.3}
\newtheorem{theorem}{Theorem}[section]
\newtheorem{definition}{Definition}
\begin{document}
\title{\textbf{Gallai-Ramsey numbers for $\ell$-connected graphs} \footnote{Supported by the National
Science Foundation of China (Nos. 12301458, 12401461, 12471329 and 12061059) and the Zhejiang Provincial Natural Science Foundation of China (No. LQ24A010005).  The fourth-listed author is supported by a Scholarly Development Assignment Program grant from Western Carolina University.}}

\author{
Zhao Wang\footnote{College of Science, China Jiliang University, Hangzhou 310018, China. {\tt wangzhao@mail.bnu.edu.cn; zhanglanyanni@163.com}}, \ \ Lanyanni Zhang\footnotemark[2], \ \ Meiqin Wei \footnote{Corresponding Author: School of Science, Shanghai Maritime University, Shanghai 201306, China. {\tt weimeiqin8912@163.com}}, \ \ Mark Budden\footnote{Department of Mathematics and Computer Science, Western Carolina University, Cullowhee, North Carolina, USA. {\tt mrbudden@email.wcu.edu}} }
\date{}
\maketitle

\begin{abstract}
Given a nonempty graph $G$, a collection of nonempty graphs $\mathcal{H}$, and a positive integer $k$, the Gallai-Ramsey number $\mathrm{gr}_k(G:\mathcal{H})$ is defined to be the minimum positive integer $n$ such that every exact $k$-edge-coloring of a complete graph $K_n$ contains either a rainbow copy of $G$ or a monochromatic copy of some element in $\mathcal{H}$.  In this paper, we obtain some exact values and general lower and upper bounds for $\mathrm{gr}_k(G:\mathcal{F}^\ell)$, where $\mathcal{F}^\ell$ is the set of $\ell$-connected graphs and $G\in\{P_5, K_{1,3}\}$.\\[2mm]
{\bf Keywords:} Ramsey number; Gallai-Ramsey number; Rainbow path; $\ell$-connected graph.\\[2mm]
{\bf MSC 2020:} 05C40; 05C55; 05D10.
\end{abstract}

\section{Introduction}

The graphs considered in this paper are assumed to be nonempty, finite, and undirected simple graphs.  We follow the notations and terminology introduced in the book \cite{BM08}.  If $G$ is a graph, its vertex set is given by $V(G)$ and its edge set is given by $E(G)$.  We denote the order and size of $G$ by $|V(G)|$ and $|E(G)|$, respectively.  For any $n\in \mathbb{N}$, define the notation $[n]:=\{1, 2, \dots , n\}$.
A {\it $k$-edge-coloring} of a graph $G$ is a map $\gamma :E(G)\longrightarrow [k]$.   Such a map is called {\it exact} if it is surjective.  For a given $k$-edge-coloring $\gamma$ of $G$, a subgraph of $G$ is called {\it monochromatic} if all of its edges are in the same color class, and it is called {\it rainbow} if no two of its edges are in the same color class.  If $x\in V(G)$, then we denote by $d_i(x)$ the number of edges in color $i$ that are incident with $x$, where $i\in [k]$.

Let $K_{n}$, $P_{n}$, and $C_{n}$ denote the complete graph, the path, and the cycle of order $n$, respectively.   Let $K_{1,n}$ be a star of size $n$ and let $nK_{2}$ be a matching of size $n$ (i.e., the disjoint union of $n$ copies of $K_2$).  If $G$ is a graph and $\emptyset \ne S\subset V(G)$, then we denote by $G-S$ the subgraph of $G$ induced by $V(G)\setminus S$.   When $S=\{x\}$, we write $G-x$ in place of $G-\{x\}$.  For any graphs $G$ and $H$, the {\it join} of $G$ and $H$, denoted $G\vee H$, is formed by taking the disjoint union of $G$ and $H$, and adding in all of the edges between $V(G)$ and $V(H)$.

Given graphs $H_1, H_2,\dots , H_k$, the {\it Ramsey number} $\mathrm{R}(H_1, H_2,..., H_k)$ is defined to be the least $n\in \mathbb{N}$ such that every $k$-edge-coloring of $K_n$ contains a monochromatic $H_i$ in color $i$, for some $i\in [k]$.  A $k$-edge-coloring of $K_{\mathrm{R}(H_1, H_2,..., H_k)-1}$ that avoids a copy of $H_i$ in color $i$, for all $i\in [k]$, is called a {\it critical coloring} for $\mathrm{R}(H_1, H_2,..., H_k)$.
If $H_1=H_2=\cdot\cdot\cdot=H_k=H$, then we simplify the notation for the Ramsey number to $\mathrm{R}_k(H)$.  More generally, if $\mathcal{H}$ is any nonempty set of graphs, then define the {\it Ramsey number} $\mathrm{R}_k(\mathcal{H})$ to be the minimum integer $n\in \mathbb{N}$ such that every $k$-edge-coloring of $K_n$ contains a monochromatic subgraph that is isomorphic to some graph in $\mathcal{H}$.

A graph $G$ is called {\it $\ell$-connected} if its order is greater than $\ell$ and deleting fewer than $\ell$ vertices results in a connected graph.  Denote by $\mathcal{F}^{\ell}$ the set of all $\ell$-connected graphs.
In 1983, Matula \cite{Ma83} considered $\mathrm{R}_k(\mathcal{F}^{\ell})$, proving the following theorem.
\begin{theorem} [\cite{Ma83}]\label{Thm:Connectivity}
For $k, \ell\geq 2$, we have
$$
2k(\ell-1)+1\leq {\rm R}_k(\mathcal{F}^\ell) < \frac{10}{3}k(\ell-1)+1.
$$
Furthermore,
$$
4(\ell-1)+1\leq {\rm R}_2(\mathcal{F}^\ell) < (3+\sqrt{11/3})(\ell-1)+1.
$$
\end{theorem}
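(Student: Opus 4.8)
The plan is to treat the two displayed lines, and within each the lower and upper bounds, separately. The lower bounds come from explicit exact $k$-edge-colourings of $K_{2k(\ell-1)}$ (resp.\ $K_{4(\ell-1)}$) with no monochromatic $\ell$-connected subgraph; the upper bounds come from combining a pigeonhole argument over the $k$ colour classes with the classical Mader-type fact that an $n$-vertex graph with no $\ell$-connected subgraph has only $O((\ell-1)n)$ edges.

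For the lower bound I would build the colouring as a ``blow-up''. Write $m=\ell-1$, partition $V(K_{2km})$ into $2k$ blocks $U_1,\dots,U_{2k}$ of size $m$, and regard $\{1,\dots,2k\}$ as the vertex set of an auxiliary $K_{2k}$. Since $K_{2k}$ decomposes into $k$ spanning trees (indeed into $k$ Hamiltonian paths), fix such a decomposition $T_1,\dots,T_k$. Colour every edge between $U_i$ and $U_j$ ($i\ne j$) by the colour $c$ with $\{i,j\}\in E(T_c)$, and distribute the $\binom{m}{2}$ edges inside each $U_i$ among the colours. Then colour class $c$ is the tree blow-up of $T_c$ (each edge of $T_c$ replaced by $K_{m,m}$) together with some cliques $K_m$ sitting inside blocks. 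In this graph, for every non-leaf $i$ of $T_c$ the block $U_i$ is a vertex cut of size $m=\ell-1$, and iterating this over all non-leaves one sees that the set of blocks met by any $\ell$-connected subgraph $H$ (which has more than $\ell$ vertices, hence meets at least two blocks) must be exactly the two endpoints of a single edge $\{i,j\}$ of $T_c$. Thus $H$ lies inside the graph induced on $U_i\cup U_j$, which is $K_{m,m}$, or $K_m\vee\overline{K_m}$, or $K_{2m}$, according as neither, one, or both of $U_i,U_j$ carries a clique of colour $c$. The first two have connectivity $m<\ell$, so they contain no $\ell$-connected subgraph; the third, $K_{2\ell-2}$, is $\ell$-connected once $\ell\ge3$, so it must be avoided. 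I would therefore choose the tree decomposition and the assignment of the within-block edges so that no edge of any $T_c$ joins two blocks both carrying colour-$c$ cliques; since each $T_c$ has many non-edges and is triangle-free, this is a routine matching/compatibility argument. This gives no monochromatic $\ell$-connected subgraph on $2k(\ell-1)$ vertices, hence $\mathrm{R}_k(\mathcal{F}^\ell)\ge 2k(\ell-1)+1$, and $k=2$ is the second lower bound.

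For the upper bound, suppose $K_n$ is exactly $k$-edge-coloured with colour classes $G_1,\dots,G_k$ and no monochromatic $\ell$-connected subgraph. A first, crude pass: the densest class has average degree at least $(n-1)/k$, and a graph whose average degree is of order $4(\ell-1)$ already contains an $\ell$-connected subgraph, so $\mathrm{R}_k(\mathcal{F}^\ell)\le 4k(\ell-1)+O(k)$. To sharpen the constant to $\tfrac{10}{3}$ I would use the extremal bound more carefully: let $g(n,\ell)$ be the maximum number of edges of an $\ell$-connected-subgraph-free $n$-vertex graph, so $\binom n2=\sum_i|E(G_i)|\le k\,g(n,\ell)$; feeding in a sufficiently good upper estimate for $g(n,\ell)$ (of the form $\tfrac53(\ell-1)n-\Omega(\ell^2)$, sharper than the $2(\ell-1)n$ coming from the crude pass) forces $n<\tfrac{10}{3}k(\ell-1)+1$. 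For $k=2$ the colouring is a graph $G_1$ together with its complement, and both are $\ell$-connected-subgraph-free; here one should not merely use $\binom n2\le 2g(n,\ell)$, but exploit that an $\ell$-connected-subgraph-free graph that is close to extremal (say close to $K_{\ell-1}\vee\overline{K_{n-\ell+1}}$) has a complement that is nearly complete on $n-\ell+1$ vertices and hence \emph{does} contain an $\ell$-connected subgraph. Balancing the edge count of $G_1$ against the structure its complement is forced to carry produces a quadratic inequality in $x=(n-1)/(\ell-1)$ of the shape $3x^2-18x+16\le0$, whose relevant root is $x=3+\sqrt{11/3}$; this yields $\mathrm{R}_2(\mathcal{F}^\ell)<(3+\sqrt{11/3})(\ell-1)+1$.

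The main obstacle is precisely this quantitative input on $\ell$-connected-subgraph-free graphs in the upper bound. The crude Mader bound only gives the constant $4$, so obtaining $\tfrac{10}{3}$ requires understanding how a partition of $E(K_n)$ into $\ell$-connected-subgraph-free graphs is constrained — for instance by locating in the densest class a separator of size $\le\ell-1$ and observing that the many edges crossing it are distributed over the remaining $k-1$ colours, and iterating. For $k=2$ the delicate point is the right way to couple $G_1$ with its complement so that the bound tightens to the quadratic above. By comparison the lower-bound construction is conceptually straightforward; its only fussy point — choosing the spanning-tree decomposition and the placement of the within-block cliques so as never to create a monochromatic $K_{2\ell-2}$ — is the routine combinatorial verification flagged above.
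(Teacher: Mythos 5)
First, note that the paper does not prove this statement at all: Theorem \ref{Thm:Connectivity} is quoted verbatim from Matula \cite{Ma83} and used as a black box, so there is no in-paper proof to compare against. Judged on its own, your reconstruction splits into a lower-bound half that is essentially correct and an upper-bound half with a genuine gap. The lower bound is fine: decomposing $K_{2k}$ into $k$ Hamiltonian paths, blowing each vertex up to a block of size $\ell-1$, and colouring crossing edges by the path containing the corresponding reduced edge does confine every candidate monochromatic $\ell$-connected subgraph to the union of two blocks joined by a single tree edge (any two blocks at distance $\ge 2$ in $T_c$ are separated by an intermediate block of size $\ell-1$, and trees are triangle-free, so at most two blocks can be met). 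The ``fussy point'' you flag is genuinely routine: assign all internal edges of block $U_i$ the unique colour $c$ for which $i$ is an endpoint of the Hamiltonian path $T_c$; the two endpoints of a Hamiltonian path on $2k\ge 4$ vertices are nonadjacent in it, so no colour class ever induces $K_{2\ell-2}$ on two adjacent blocks, and $K_{m,m}$ and $K_m\vee\overline{K_m}$ with $m=\ell-1$ have maximum degree (on enough vertices) below $\ell$, hence no $\ell$-connected subgraph.

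The gap is in the upper bounds, and it is the mathematical heart of Matula's theorem. Your argument reduces both displayed inequalities to an extremal lemma of the form $g(n,\ell)\lesssim \tfrac{5}{3}(\ell-1)n$ for the maximum size of an $n$-vertex graph with no $\ell$-connected subgraph, but you do not prove that lemma --- you explicitly label it the ``main obstacle'' and only gesture at a separator-iteration argument. The classical Mader-type bound you invoke as the ``crude pass'' gives roughly $(2\ell-3)(n-\ell+1)$ edges, i.e.\ the constant $2$ rather than $\tfrac{5}{3}$, and closing that gap is precisely the nontrivial content of \cite{Ma83} (it rests on Matula's earlier quantitative work on maximum subgraph connectivity, not on a one-line pigeonhole refinement). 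Likewise, for $k=2$ you reverse-engineer the quadratic $3x^2-18x+16\le 0$ from the stated root $3+\sqrt{11/3}$ rather than deriving it from a concrete coupling of a colour class with its complement; as written there is no argument that produces that inequality. So the proposal correctly identifies the architecture of the proof (explicit blow-up colourings below, pigeonhole plus an edge-density threshold for $\ell$-connected subgraphs above), but the two upper bounds are asserted, not established.
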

\noindent In this paper, we consider the analogous problem for $k$-edge-colorings that avoid rainbow $P_5$-subgraphs or rainbow $K_{1,3}$-subgraphs.

\begin{definition}
Given a nonempty graph $G$, a collection of nonempty graphs $\mathcal{H}$, and $k\in \mathbb{N}$, the Gallai-Ramsey number $\mathrm{gr}_k(G:\mathcal{H})$ is defined to be the minimum integer $n\in \mathbb{N}$ such that every exact $k$-edge-coloring of $K_n$ contains either a rainbow copy of $G$ or a monochromatic copy of some $H\in \mathcal{H}$.
\end{definition}

This variation of a Ramsey number is named after Tibor Gallai \cite{Gallai} who, in 1967, proved a result equivalent to a classification of rainbow $K_3$-free colorings of complete graphs.  An English translation of \cite{Gallai} by Maffray and Priessmann can be found in \cite{Perfect}, and a statement of Gallai's result in terms of rainbow $K_3$-free colorings can be found in \cite{GyarfasSimonyi} (see also \cite{CameronEdmonds}).  For additional details on Gallai-Ramsey numbers, we refer the reader to \cite{LLS24,LBMWW20,LW20,LS24,LL24,ZW21,ZW22}.

In order for $K_n$ to have an exact $k$-edge-coloring, it is necessary that $${n\choose 2}=\frac{n(n-1)}{2}\ge k,$$ which is equivalent to $n^2-n-2k\ge 0$.  Solving this inequality for $n$, we find that \begin{equation}{\rm gr}_k(G:\mathcal{H})\ge \ceil{\frac{1+\sqrt{1+8k}}{2}},\label{basiclower}\end{equation} for any graph $G$ and any collection of graphs $\mathcal{H}$.
The additional restriction of forbidding a rainbow copy of $G$ renders it evident that ${\rm gr}_k(G:\mathcal{H})\leq {\rm R}_k(\mathcal{H})$.  Also, when $k< |E(G)|$, it is not possible to have a rainbow copy of $G$, and it follows that ${\rm gr}_k(G:\mathcal{H})= {\rm R}_k(\mathcal{H})$.

As we will consider Gallai-Ramsey numbers of the forms ${\rm gr}_k(P_5:\mathcal{F}^\ell)$ and ${\rm gr}_k(K_{1,3}:\mathcal{F}^\ell)$, we must first describe two structural theorems for the corresponding rainbow $P_5$-free and rainbow $K_{1,3}$-free colorings.
Given an edge-colored complete graph, define $V^{j}$ to be the set of vertices that are incident with at least one edge in color $j$, and denote by $E^{j}$ the set of edges with color $j$. Thomason and Wagner \cite{TW07} obtained the following structural theorem for edge-colored complete graphs that avoid a rainbow $P_5$.

\begin{theorem}[\cite{TW07}]\label{Thm:P_5}
Let $n\ge 5$ and consider an edge coloring of $K_n$ that avoids a rainbow $P_5$. Then, up to isomorphism, and a possible reordering of the colors, one of the following holds:
\begin{enumerate}
	\item[$(a)$] At most three colors are used.
	\item[$(b)$] Color $1$ is dominant; that is, the vertex set can be partitioned into $A, V^2, V^3, V^4, \ldots$ such that edges within $V^j$ are colored $1$ or $ j$, edges meeting $A$ and edges between $V^j$ ($j\geq 2$) are colored $1$. This implies that the sets $V^j$ ($j\geq 2$) are disjoint.
	\item[$(c)$] $K_n-a$ is monochromatic for some vertex $a$.
	\item[$(d)$] There are three vertices $a,b,c$ such that $E^{2}=\{ab\}$,
$E^{3}=\{ac\}$, $E^{4}$ contains $bc$ plus perhaps some edges
incident with $a$, and every other edge is in $E^{1}$.
	\item[$(e)$] There are four vertices $a,b,c,d$ such that $\{ab\}\subseteq
E^{2}\subseteq \{ab, cd\}$, $E^{3}=\{ac, bd\}$, $E^{4}=\{ad,
bc\}$ and every other edge is in $E^{1}$.
	\item[$(f)$] $n=5$, $V(K_n) = \{a, b, c, d, e\}$, $E^{1} =\{ad, ae, bc\}$,
$E^{2}=\{bd, be, ac\}$, $E^{3}=\{cd, ce, ab\}$ and
$E^{4}=\{de\}$.
\end{enumerate}
\end{theorem}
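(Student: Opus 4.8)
The natural approach is a direct structural analysis organized around a longest rainbow path, which has at most three edges since a rainbow $P_5$ is forbidden. If this longest path has at most one edge, then there is no rainbow $P_3$, and hence the coloring is monochromatic (in $K_n$ with $n\ge 3$, any two adjacent edges of different colors would already form a rainbow $P_3$), so case $(a)$ holds. If the longest rainbow path has exactly two edges, i.e.\ there is no rainbow $P_4$, then I would argue that only a bounded amount of color can survive: given a rainbow $P_3$ on $x$-$y$-$z$ with colors $\alpha\neq\beta$, forbidding the rainbow $P_4$'s $w$-$x$-$y$-$z$ and $x$-$y$-$z$-$w$ forces $c(wx),c(wz)\in\{\alpha,\beta\}$ for every other vertex $w$, and pushing the same idea through the edges $wy$ and the edges inside $V(K_n)\setminus\{x,y,z\}$ shows that at most one further color can occur (the only genuinely $3$-colored exception being the properly $3$-edge-colored $K_4$, which is excluded here since $n\ge 5$); so again case $(a)$ holds. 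Consequently, once at least four colors are used we may assume a rainbow $P_4$ is present, and that is the substantive case.

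Fix a rainbow $P_4$, say $a$-$b$-$c$-$d$ with $c(ab)=1$, $c(bc)=2$, $c(cd)=3$, put $X=\{a,b,c,d\}$ and $Y=V(K_n)\setminus X$. For every $v\in Y$, the forbidden rainbow $P_5$'s $v$-$a$-$b$-$c$-$d$ and $a$-$b$-$c$-$d$-$v$ immediately give $c(va),c(vd)\in\{1,2,3\}$; then inspecting the paths $a$-$v$-$b$-$c$-$d$, $a$-$b$-$v$-$c$-$d$, and $a$-$b$-$c$-$v$-$d$ together with these colors pins down $c(vb)$ and $c(vc)$ up to a few alternatives (for instance $c(vb)\notin\{1,2,3\}$ is impossible, and $c(vb)=3$ can occur only when $c(va)\in\{2,3\}$). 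Edges inside $Y$ are controlled in the same spirit: if some $v\in Y$ has $c(cv)=3$, then $a$-$b$-$c$-$v$ is itself a rainbow $P_4$, so $a$-$b$-$c$-$v$-$u$ forces $c(vu)\in\{1,2,3\}$ for every $u$, and relabelled variants of this rule out the remaining ways a fourth color could sit on a $Y$-edge. Iterating these local restrictions, one relabels the colors so that color $1$ is the unique color that can meet all but boundedly many vertices, and every edge not colored $1$ is either confined to a bounded set of special vertices or is forced into the blow-up pattern described in $(b)$.

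It then remains to read off which case holds from the location of the non-dominant colors. If at most three colors occur we are in $(a)$. Otherwise, either all the color is concentrated on a single vertex's star, giving $(c)$; or it is spread out, in which case the propagation above yields a partition $V(K_n)=A\cup V^2\cup V^3\cup\cdots$ with edges inside $V^j$ colored $1$ or $j$ and all remaining edges colored $1$, and the same path arguments show the sets $V^j$ are pairwise disjoint, which is $(b)$; or the non-dominant colors live on a set of at most four vertices, and a finite check of which colors and edges can survive there returns exactly $(d)$ or $(e)$. The boundary value $n=5$ is small enough to be settled as a self-contained base computation, and it is there that the sporadic coloring $(f)$ --- together with the degenerate instances of $(d)$ and $(e)$ --- arises.

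The main obstacle is the bookkeeping in the middle two paragraphs: the forbidden-$P_5$ constraints are inherently local, involving four or five vertices at a time, so converting them into the clean global statements of $(b)$ takes care, and one will likely need to re-choose the initial rainbow $P_4$ --- for example one maximizing the number of distinct colors it meets, or the number of vertices of $Y$ attached to it in a generic way --- so that the propagation is tight rather than merely giving a weak bound. Pinning down exactly where the dividing line between the generic case $(b)$ and the finite list $(c)$--$(f)$ falls, and verifying that no further sporadic coloring has been overlooked (especially at $n=5$, where several of the six descriptions collapse into one another), is the most delicate point. An alternative route, which trades this propagation for a six-way case analysis, is to induct on $n$: delete a vertex $v$, apply the classification to $K_n-v$, and determine how $v$ may be reattached without creating a rainbow $P_5$ in each of the six possible structures.
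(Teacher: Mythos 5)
This statement is not proved in the paper at all: it is Theorem~1 of Thomason and Wagner, imported verbatim with the citation \cite{TW07}, so there is no in-paper argument to measure your attempt against. Your high-level strategy --- classify by the length of a longest rainbow path, dispose of the no-rainbow-$P_3$ and no-rainbow-$P_4$ cases as instances of $(a)$, then fix a rainbow $P_4$ and propagate the constraints coming from forbidden rainbow $P_5$'s --- is indeed the strategy used in \cite{TW07}, and your first reduction (no rainbow $P_3$ forces a monochromatic coloring; no rainbow $P_4$ with $n\ge 5$ forces at most three colors) is sound.

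However, what you have written is a plan rather than a proof, and at least one of the sample deductions on which the plan rests is false. With the rainbow $P_4$ $a$-$b$-$c$-$d$ colored $1,2,3$, you assert that $c(vb)\notin\{1,2,3\}$ is impossible for $v$ outside the path; but in configuration $(c)$ (take $K_n-w$ monochromatic in color $1$ and the star at $w$ using colors $2,3,4,\dots$), the rainbow $P_4$'s all have the form $x$-$w$-$y$-$z$, so after your relabelling $b=w$, and $c(vb)=c(vw)$ can be any star color, including a fourth one. The only constraint your paths actually yield is conditional (e.g.\ $a$-$v$-$b$-$c$-$d$ forbids a fourth color on $vb$ only when $c(av)=1$), and it is precisely these loopholes that generate configurations $(c)$, $(d)$, $(e)$; a propagation argument that ignores them would wrongly exclude $(c)$. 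More fundamentally, the entire content of the theorem is the completeness of the list $(b)$--$(f)$, and your argument for that is ``a finite check \dots returns exactly $(d)$ or $(e)$'' together with the acknowledgement that verifying no sporadic coloring has been overlooked ``is the most delicate point'' --- i.e., the check is named but never performed, including the $n=5$ analysis where $(f)$ arises. Until that case analysis is carried out, the classification is not established; in the context of the present paper the correct move is simply to cite \cite{TW07}.
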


Next, we describe the structure of an edge-colored complete graph that avoids a rainbow $K_{1,3}$-subgraph.
For $n\geq 1$, let $G_1(n)$ be a $3$-edge-colored $K_n$ that
satisfies the following conditions: the vertices of $K_n$ are partitioned into three pairwise disjoint sets $V_1$, $V_2$ and $V_3$ such that for $1\leq i\leq 3$ (with indices reduced modulo $3$), all edges
between $V_i$ and $V_{i+1}$ have color $i$, and all edges connecting pairs of vertices within $V_{i+1}$ have color $i$ or $i + 1$. Note that one of $V_1$, $V_2$, and $V_3$ is allowed to be empty, but at
least two of them are nonempty.  Bass et al. \cite{BMOP} considered the edge-colorings of complete graphs without rainbow $K_{1,3}$-subgraphs and obtained the following result.

\begin{theorem}[\cite{BMOP, GLST87}]\label{th-Star-Structure}
For positive integers $k$ and $n$, if $G$ is a $k$-edge-coloring of $K_n$ that avoids a rainbow $K_{1,3}$, then after renumbering the colors, one of the following holds:
\begin{enumerate}
	\item[$(i)$] $k\leq 2$ or $n\leq 3$.
	\item[$(ii)$] $k=3$ and $G\cong G_1(n)$.
	\item[$(iii)$] $k\geq 4$ and Item $(b)$ in Theorem \ref{Thm:P_5} holds.
\end{enumerate}
\end{theorem}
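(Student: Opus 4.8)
The plan is to turn the hypothesis into a local condition on vertices and then run a short Helly-type argument. The first observation is that an edge-colored $K_n$ contains a rainbow $K_{1,3}$ if and only if some vertex is incident with edges of three distinct colors; writing $C(v)$ for the set of colors appearing on the edges at $v$, the hypothesis says exactly that $|C(v)|\le 2$ for every vertex $v$. If $k\le 2$ or $n\le 3$ we are in case $(i)$, so from now on assume $k\ge 3$ and $n\ge 4$, and (after renumbering) that all $k$ colors are used. Since $K_n$ is complete, for any two vertices $u,v$ the edge $uv$ carries a color lying in $C(u)\cap C(v)$; hence $\{C(v):v\in V(K_n)\}$ is a family of nonempty sets, each of size at most $2$, that pairwise intersect.

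I would then invoke the elementary dichotomy: a pairwise-intersecting family of nonempty sets, each of size at most $2$, either $(A)$ has an element common to all members, or $(B)$ has all of its members among the three $2$-subsets $\{a,b\},\{b,c\},\{a,c\}$ of some $3$-element set $\{a,b,c\}$. (If a member is a singleton $\{a\}$, every member contains $a$; otherwise, fixing a member $\{a,b\}$, a member avoiding $a$ must equal $\{b,c\}$ with $c\ne a$, a member avoiding $b$ must equal $\{a,c\}$, and any further member is contained in $\{a,b,c\}$ or else misses one of $\{a,b\},\{b,c\},\{a,c\}$.) Apply this to $\{C(v)\}$.

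In Case $(B)$, every color occurring in some $C(v)$ — hence every color used — lies in the fixed $3$-element set, so $k=3$; renumber so that $\{a,b,c\}=\{1,2,3\}$ and partition $V(K_n)$ by the value of $C(v)$ into $V_{12}, V_{23}, V_{13}$. For $u\in V_{ij}$ and $v\in V_{jk}$ the edge $uv$ is forced to receive the unique common color $j$, while an edge inside $V_{ij}$ receives color $i$ or $j$; this shows $G\cong G_1(n)$ with $(V_1,V_2,V_3)=(V_{13},V_{12},V_{23})$, at least two of these parts being nonempty since all three colors are used. Hence $(ii)$ holds.

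In Case $(A)$, renumber so the common color is $1$; then each $C(v)$ equals $\{1\}$ or $\{1,j\}$ for a unique $j\ge 2$. Taking $V^j$ (in the notation fixed just before Theorem~\ref{Thm:P_5}) to be the vertices incident with a color-$j$ edge and $A=\{v:C(v)=\{1\}\}$, the sets $A, V^2, V^3, \ldots$ are pairwise disjoint and cover $V(K_n)$; moreover an edge inside $V^j$ gets color $1$ or $j$ and an edge meeting $A$ or joining two distinct $V^j$'s gets color $1$ — which is precisely Item $(b)$ of Theorem~\ref{Thm:P_5}, yielding $(iii)$ when $k\ge 4$. When $k=3$ I would check that this coloring is again isomorphic to $G_1(n)$, via $V_1=A\cup V^3$, $V_2=V^2$, $V_3=\emptyset$ (here $V^2, V^3\ne\emptyset$ because colors $2$ and $3$ are used), so $(ii)$ holds; this exhausts the cases. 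The points needing care are stating the dichotomy correctly — the exceptional configuration genuinely occurs, as $\{1,2\},\{2,3\},\{1,3\}$ shows, and it is exactly what places $G_1(n)$ in the conclusion — and the bookkeeping in Case $(A)$ with $k=3$ identifying a dominant-color $3$-coloring as a degenerate $G_1(n)$; everything else is routine. Alternatively, since a rainbow $P_5=v_1v_2v_3v_4v_5$ would force $v_2v_4$ to carry a color in $C(v_2)\cap C(v_4)=\{c(v_1v_2),c(v_2v_3)\}\cap\{c(v_3v_4),c(v_4v_5)\}=\emptyset$, the coloring is rainbow-$P_5$-free and Theorem~\ref{Thm:P_5} applies; its cases $(d),(e),(f)$ each exhibit a vertex seeing three colors, impossible for $n\ge 4$, case $(c)$ collapses to $k\le 2$ or the same degenerate $G_1(n)$, and cases $(a),(b)$ lead to $(i)$, $(ii)$, or $(iii)$ — but this route still bottoms out at the same $k=3$ analysis.
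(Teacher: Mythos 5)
The paper does not prove this statement at all: it is quoted as a known result and attributed to \cite{BMOP, GLST87}, so there is no internal proof to compare yours against. Your argument is a correct, self-contained elementary proof, and its key reduction is sound: a rainbow $K_{1,3}$ exists iff some vertex sees three colors, so the hypothesis is exactly $|C(v)|\le 2$ for all $v$, and completeness of $K_n$ makes $\{C(v)\}$ a pairwise-intersecting family of sets of size at most $2$. Your dichotomy for such families is stated and justified correctly (the only configurations without a common element are the three $2$-subsets of a triple), case $(B)$ really does produce $G_1(n)$ under the assignment $(V_1,V_2,V_3)=(V_{13},V_{12},V_{23})$ (I checked this against the paper's definition of $G_1(n)$, including the mod-$3$ index convention), and case $(A)$ is precisely the ``dominant color'' structure of Item $(b)$; the degenerate identification of a dominant-color $3$-coloring with $G_1(n)$ via $V_3=\emptyset$ is also consistent with the paper's stipulation that one part of $G_1(n)$ may be empty. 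Two points worth making explicit if you write this up: (1) in case $(B)$ you should say why no $C(v)$ is a singleton there (your parenthetical already shows a singleton forces a common element, i.e.\ case $(A)$); (2) the claim ``every color used lies in $\{a,b,c\}$'' needs the one-line remark that every used color appears in some $C(v)$. Your closing alternative route through Theorem~\ref{Thm:P_5} is also legitimate (the observation that $c(v_2v_4)\in C(v_2)\cap C(v_4)=\emptyset$ kills any rainbow $P_5$), but as you note it does not save the $k=3$ bookkeeping, so the direct Helly-type argument is the cleaner of the two.
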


In Section \ref{Sec2}, we first give exact values for ${\rm gr}_{k}(P_5:\mathcal{F}^2)$, when $k\ge 3$. Then we obtain exact values for ${\rm gr}_{k}(P_5:\mathcal{F}^3)$ and ${\rm gr}_{k}(P_5:\mathcal{F}^4)$ when $k\geq 4$, and upper and lower bounds for ${\rm gr}_{k}(P_5:\mathcal{F}^3)$ and ${\rm gr}_{k}(P_5:\mathcal{F}^4)$ when $k=3$.  For $\ell\geq 5$, we give exact values for ${\rm gr}_{k}(P_5:\mathcal{F}^{\ell})$ when $k\geq\ceil{\frac{\ell}{2}}+2$, and upper and lower bounds for ${\rm gr}_{k}(P_5:\mathcal{F}^{\ell})$ when $k\leq\ceil{\frac{\ell}{2}}+1$.  At the end of Section \ref{Sec2}, we obtain the exact values of $\mathrm{gr}_{k}(P_5:\mathcal{F}_n^{\ell})$ when $n\geq \ell+3$, $2\leq \ell\leq 2k-4$, and $n \geq k\geq 5$, where $\mathcal{F}_n^{\ell}$ denotes the set of $\ell$-connected graphs of order $n$.  In Section \ref{Sec3}, we give exact values for ${\rm gr}_{k}(K_{1,3}:\mathcal{F}^{\ell})$ when $k\geq\ceil{\frac{\ell}{2}}+2$, and upper and lower bounds for ${\rm gr}_{k}(K_{1,3}:\mathcal{F}^{\ell})$ when $k\leq\ceil{\frac{\ell}{2}}+1$.

\section{Rainbow \bm{$P_5$}-Free Edge-Colorings}\label{Sec2}

In this section, we consider the Gallai-Ramsey numbers ${\rm gr}_k(P_5:\mathcal{F}^{\ell})$, starting first with the case $\ell =2$.
We will need the following theorem, which is Fact 3.4 in \cite{wei}.

\begin{theorem} [\cite{wei}] \label{Thm:K_5}
Every exact $6$-edge-coloring of $K_5$ contains a rainbow copy of $P_5$.
\end{theorem}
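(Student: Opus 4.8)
The plan is to argue by contradiction, leaning entirely on the structural classification in Theorem~\ref{Thm:P_5}. Suppose some exact $6$-edge-coloring $\gamma$ of $K_5$ contains no rainbow $P_5$. Since $n=5\geq 5$, Theorem~\ref{Thm:P_5} applies, so after relabeling vertices and reordering colors $\gamma$ realizes one of the patterns (a)--(f). The goal is to show each pattern uses strictly fewer than $6$ colors, contradicting the surjectivity of $\gamma$.

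Four of the cases should be immediate from the statement of Theorem~\ref{Thm:P_5}: case (a) uses at most three colors by hypothesis; cases (d) and (e) only involve the colors $1,2,3,4$; and case (f) is stated for $n=5$ with precisely four color classes $E^1,\dots,E^4$. For case (c), $K_5-a$ is monochromatic for some vertex $a$, so the ${4\choose 2}=6$ edges of $K_5-a$ all receive one color, and the only remaining edges are the four edges incident with $a$; hence at most $1+4=5$ colors occur. In all five of these cases $\gamma$ uses fewer than $6$ colors, the desired contradiction.

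The one case requiring a genuine (if minor) argument is the dominant-color case (b). There $V(K_5)=A\cup V^2\cup V^3\cup\cdots$, every edge meeting $A$ or joining two distinct parts $V^i,V^j$ is colored $1$, and each edge inside $V^j$ is colored $1$ or $j$. Thus for $j\geq 2$, color $j$ can appear only on an edge contained in $V^j$, which forces $|V^j|\geq 2$. Since $|A|+\sum_{j\geq 2}|V^j|=5$, at most two of the parts $V^j$ can have size at least two, so at most two colors besides color $1$ are used, i.e.\ at most three colors in total. This again contradicts exactness, which finishes the case analysis and hence the proof.

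I do not anticipate a real obstacle here: the whole argument is bookkeeping once Theorem~\ref{Thm:P_5} is invoked, the only non-mechanical step being the translation of the partition in case (b) into the vertex count $|A|+\sum|V^j|=5$ that caps the number of nontrivial parts. As an alternative one could avoid the structure theorem and argue directly, since an exact $6$-coloring of the $10$ edges of $K_5$ has color-class sizes among $(5,1,1,1,1,1)$, $(4,2,1,1,1,1)$, $(3,3,1,1,1,1)$, $(3,2,2,1,1,1)$, $(2,2,2,2,1,1)$ and a rainbow Hamiltonian path is easily located in each case; but the route through Theorem~\ref{Thm:P_5} is shorter and I would present that one.
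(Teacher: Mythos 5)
Your argument is correct. Note, however, that the paper does not prove this statement at all: it is imported verbatim as Fact~3.4 of the cited reference \cite{wei}, so there is no in-paper proof to compare against. What you have done is supply a self-contained derivation from the Thomason--Wagner classification (Theorem~\ref{Thm:P_5}), which is already quoted in the paper, and the case analysis checks out: cases $(a)$, $(d)$, $(e)$, $(f)$ cap the number of colors at $3$ or $4$ by inspection; in case $(c)$ the six edges of $K_5-a$ share one color and only the four edges at $a$ remain, giving at most $5$ colors; and in case $(b)$ each color $j\geq 2$ that actually occurs forces $|V^j|\geq 2$, and the disjointness of the parts inside a $5$-vertex graph caps the total at $3$ colors. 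Every case contradicts the surjectivity of an exact $6$-coloring, so the theorem follows. This buys the paper a small gain in self-containedness (one fewer external dependency, and on a reference listed as ``submitted'' in spirit of the same circle of results), at the cost of a short case analysis; the citation route is of course shorter on the page. One stylistic caution: your fallback ``direct'' argument via the color-class size partitions of $10$ into six parts is plausible but not actually carried out, and locating a rainbow Hamiltonian path in each distribution is more work than the sentence suggests, so you are right to present the structure-theorem route instead.
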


\begin{theorem}\label{Tm:l=2}
For $k\geq 3$, we have
$$
{\rm gr}_{k}(P_5:\mathcal{F}^2) = \begin{cases}
7 & \text{if $k=3$},\\
6 & \text{if $k=4$},\\
5 & \text{if $k=5, 6$},\\
\ceil{\frac{1+\sqrt{1+8k}}{2}} & \text{if $k\geq 7$}.\\
\end{cases}
$$
\end{theorem}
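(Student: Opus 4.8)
The plan is to establish matching lower and upper bounds in each of the four ranges $k=3$, $k=4$, $k\in\{5,6\}$, and $k\ge 7$. Throughout I use the elementary remark that a color class of an edge-coloring of a complete graph contains a monochromatic member of $\mathcal{F}^2$ precisely when that color class is not a forest (any cycle is $2$-connected, and a forest has no $2$-connected subgraph), together with the fact that a forest on $m$ vertices has at most $m-1$ edges.

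For the lower bounds I give explicit colorings. When $k=3$, since $|E(P_5)|=4>3$ no exact $3$-edge-coloring can contain a rainbow $P_5$, so $\mathrm{gr}_3(P_5:\mathcal{F}^2)=\mathrm{R}_3(\mathcal{F}^2)$; Theorem \ref{Thm:Connectivity} gives $\mathrm{R}_3(\mathcal{F}^2)\ge 7$, while a pigeonhole count gives $\mathrm{R}_3(\mathcal{F}^2)\le 7$ (in any $3$-edge-coloring of $K_7$ some color class has at least $\lceil 21/3\rceil=7$ edges on $7$ vertices, hence contains a cycle). When $k\in\{5,6\}$, any exact $k$-edge-coloring of $K_4$ has no rainbow $P_5$ because $K_4$ contains no $P_5$, and one may choose the coloring so that every color class has at most two edges and hence is a forest; this gives $\mathrm{gr}_k(P_5:\mathcal{F}^2)\ge 5$. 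When $k=4$, I take the $4$-edge-coloring of $K_5$ exhibited in part $(f)$ of Theorem \ref{Thm:P_5}, which avoids a rainbow $P_5$, and check that each of its four color classes is a linear forest, giving $\mathrm{gr}_4(P_5:\mathcal{F}^2)\ge 6$. For $k\ge 7$ the lower bound is exactly \eqref{basiclower}.

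The substance is the upper bounds, obtained from the Thomason--Wagner classification (Theorem \ref{Thm:P_5}). Fix $k\ge 4$, let $n$ denote the claimed value (so $n=6$ if $k=4$, $n=5$ if $k\in\{5,6\}$, and $n=\lceil(1+\sqrt{1+8k})/2\rceil$, which is at least $5$, if $k\ge 7$), and suppose $\gamma$ is an exact $k$-edge-coloring of $K_n$ with no rainbow $P_5$. Since $n\ge 5$, Theorem \ref{Thm:P_5} applies. Case $(a)$ uses at most three colors and cases $(d),(e),(f)$ at most four (and $(f)$ also requires $n=5$); since $\gamma$ uses all $k$ colors, for $k\ge 5$ none of $(a),(d),(e),(f)$ can occur, and for $k=4$ only $(d)$ and $(e)$ among them remain. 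In case $(c)$, $K_n-a$ is a monochromatic $K_{n-1}$ with $n-1\ge 4$, so it lies in $\mathcal{F}^2$. In case $(b)$, writing $V(K_n)=A\cup V^2\cup\cdots\cup V^k$, a color $j\ge 2$ can appear only on an edge inside $V^j$, so each $|V^j|\ge 2$ and $n\ge 2(k-1)$; this contradicts $n=5$ when $k\in\{5,6\}$ and contradicts $n\le(3+\sqrt{1+8k})/2<2(k-1)$ when $k\ge 7$, while for $k=4$ it forces $A=\emptyset$ and $|V^2|=|V^3|=|V^4|=2$, and then the edges joining distinct $V^j$'s form a monochromatic $K_{2,2,2}\in\mathcal{F}^2$ in color $1$. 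Finally, in cases $(d)$ and $(e)$ with $k=4$, $n=6$, every edge among the (at least two) vertices outside the distinguished set $\{a,b,c\}$, respectively $\{a,b,c,d\}$, lies in $E^1$, which yields a monochromatic triangle in color $1$ (on the three outside vertices in case $(d)$; on two outside vertices together with $a$ in case $(e)$). For $k=6$, $n=5$ one may alternatively quote Theorem \ref{Thm:K_5} directly. In every case $\gamma$ contains a monochromatic member of $\mathcal{F}^2$, so $n$ is an upper bound for $\mathrm{gr}_k(P_5:\mathcal{F}^2)$.

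I expect the main obstacle to lie in the case analysis in $(b),(d),(e)$ for $k=4$: one must extract a genuine $2$-connected monochromatic subgraph (the $K_{2,2,2}$, or a color-$1$ triangle) rather than merely a large color class, and one must be scrupulous about when using all $k$ colors actually excludes the low-color structures $(a),(d),(e),(f)$. The verification that the $k=4$ lower-bound coloring has acyclic color classes, together with the pigeonhole estimate for $\mathrm{R}_3(\mathcal{F}^2)\le 7$, is routine.
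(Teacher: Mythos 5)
Your lower bounds and your case analysis of the Thomason--Wagner structures are correct, and in outline your upper-bound argument is the same as the paper's (invoke Theorem~\ref{Thm:P_5}, dispose of each surviving case); your lower-bound construction for $k=4$ via configuration $(f)$ differs from the paper's (which uses a coloring of type $(e)$), but both are valid. There is, however, one genuine gap: you fix $n$ to be \emph{exactly} the claimed value and only rule out colorings of $K_n$ for that single $n$. Because the colorings here are required to be exact, the property ``every exact $k$-edge-coloring of $K_n$ contains a rainbow $P_5$ or a monochromatic member of $\mathcal{F}^2$'' is not automatically inherited by larger $n$ (restricting an exact coloring to a subcomplete graph need not be exact), so the upper bound must be verified for every $n$ at least the threshold, as the paper does by writing ``$K_n$ ($n\ge 6$),'' etc.

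Concretely, the step that fails is your treatment of case $(b)$ for $k\ge 5$: you derive $n\ge 2(k-1)$ and declare this a contradiction with $n=5$ (for $k\in\{5,6\}$) or with $n\le(3+\sqrt{1+8k})/2$ (for $k\ge 7$). For $n$ strictly larger than the threshold --- say $k=5$ and $n=12$ --- case $(b)$ genuinely occurs and yields no contradiction, and your argument then produces nothing. The repair is immediate and is exactly what the paper does: whenever $(b)$ holds with $k\ge 4$, the sets $V^2$ and $V^3$ each have at least two vertices and all edges between them are colored $1$, so color $1$ contains a $C_4$, which is $2$-connected; this works for every $n$, with no need for the exact count $n=2(k-1)$ that underlies your $K_{2,2,2}$ extraction at $k=4$, $n=6$. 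Your handling of cases $(c)$, $(d)$, $(e)$ extends to all larger $n$ without change (there are only more outside vertices), so once case $(b)$ is argued uniformly the proof is complete.
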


\begin{proof} We break the proof into cases, based on the value of $k$.

\underline{Case 1:} Assume that $k=3$.
Note that ${\rm gr}_3(P_5:\mathcal{F}^2)={\rm R}_3(\mathcal{F}^2)$ since four colors are necessary to have a rainbow $P_5$.  The lower bound $\mathrm{R}_3(\mathcal{F}^{2})\ge 7$ is given by Theorem \ref{Thm:Connectivity}.
To prove that $7$ is also an upper bound, consider an exact $3$-coloring of $K_n$, where $n\ge 7$.  Since $|E(K_n)|=\frac{n(n-1)}{2}$ and the edges are partitioned among three colors, by the Pigeonhole Principle, some color class contains at least $\ceil{\frac{n(n-1)}{6}}$ edges.  Since $n\ge 7$, it follows that $\ceil{\frac{n(n-1)}{6}}\ge n$.  Without loss of generality, suppose that there are at least $n$ edges in color $1$.  So, the subgraph of $K_n$ spanned by the edges in color $1$ cannot be a tree or a forest, as such a graph would have size at most $n-1$.  Hence, there exists a cycle in color $1$, which is a $2$-connected graph.  It follows that $\mathrm{R}_3(\mathcal{F}^{2})\le 7$.

\underline{Case 2:} Assume that $k=4$.
Let $\gamma$ be the exact $4$-edge-coloring of $K_5$ with vertex set $V(K_5)=\{v_i:1\leq i\leq 5\}$ such that $\gamma(v_1v_2)=\gamma(v_3v_4)=2$, $\gamma(v_1v_3)=\gamma(v_2v_4)=3$, $\gamma(v_1v_4)=\gamma(v_2v_3)=4$, and $\gamma(v_1v_5)=\gamma(v_2v_5)= \gamma(v_3v_5)=\gamma(v_4v_5)=1$.
It is easy to check there is neither a rainbow copy of $P_5$ nor a monochromatic $2$-connected subgraph, which implies that $\mathrm{gr}_{4}(P_5:F^2)\geq 6$.  To show that $\mathrm{gr}_{4}(P_5:F^2)\leq 6$, consider an exact $4$-edge-coloring of $K_n$ ($n\ge 6$) that avoids a rainbow $P_5$.
By Theorem \ref{Thm:P_5}, one of the conditions $(b)$, $(c)$, $(d)$, or $(e)$ must hold.  If $(b)$ holds, then $|V^2|\geq 2$ and $|V^3|\geq 2$.  Since all edges between $V^2$ and $V^3$ are colored $1$, there is a $C_4$ in color $1$, which is a $2$-connected subgraph.
If $(c)$ holds, then $K_6-a$ is monochromatic for some vertex $a$.  It follows that
there is a monochromatic $K_5$, which necessarily contains a monochromatic $2$-connected subgraph.
If $(d)$ or $(e)$ holds, we can find a monochromatic $K_2\vee \overline{K}_2$ or $K_2\vee \overline{K}_4$ in color $1$, respectively.  Thus, in all cases, there is a monochromatic $2$-connected subgraph, so $\mathrm{gr}_{4}(P_5:F^2)\leq 6$.

\underline{Case 3:} Assume that $k\in \{5,6\}$.
Every exact $5$ or $6$-edge-coloring of $K_4$ has at most two colors in any given color class, so a monochromatic $2$-connected subgraph is avoided.  There also are not enough vertices to have a rainbow $P_5$.  Thus, $\mathrm{gr}_{k}(P_5:F^2)\geq 5$ when $k\in \{5,6\}$.   To prove that $5$ is also an upper bound, we first handle the $k=5$ case.  Consider an exact $5$-edge-coloring of $K_n$ ($n\ge 5$) that avoids a rainbow $P_5$.  From Theorem \ref{Thm:P_5}, one of $(b)$ or $(c)$ must hold.   If $(b)$ holds, then $\min\{|V^2|, |V^3|,|V^4|,|V^5|\}\geq 2$. Since all edges between $V^i$ and $V^j$ $(2\leq i\neq j\leq 5)$ are colored $1$, there is a monochromatic $C_4$ in color $1$.  If (c) holds, then there exists a monochromatic $K_4$, which contains a $C_4$.  Thus, ${\rm gr}_5(P_5:\mathcal{F}^2)\le 5$.
If $k=6$, then consider an exact $6$-edge-coloring of $K_n$ ($n\ge 5$).   In $n=5$, then Theorem \ref{Thm:K_5} implies that every $6$-edge-coloring of $K_5$ contains a rainbow $P_5$.  If $n\ge 6$, then from Theorem \ref{Thm:P_5}, either $(b)$ or $(c)$ holds.  Similar to the $k=5$ case, we can find a monochromatic $2$-connected subgraph.
It follows that ${\rm gr}_6(P_5:\mathcal{F}^2)\le 5$.

\underline{Case 4:} Assume that $k\ge 7$.
For $k\geq 7$, we have $\mathrm{gr}_{k}(P_5:\mathcal{F}^2) \geq \ceil{\frac{1+\sqrt{1+8k}}{2}},$ by Inequality (\ref{basiclower}).
To prove that $\mathrm{gr}_{k}(P_5:\mathcal{F}^2) \leq \ceil{\frac{1+\sqrt{1+8k}}{2}}$, let $\gamma$ be an exact $k$-edge-coloring of $K_n$, where $n\geq \ceil{\frac{1+\sqrt{1+8k}}{2}}$, that avoids a rainbow $P_5$.  From Theorem \ref{Thm:P_5}, either $(b)$ or $(c)$ holds. Similar to the previous case, we can find monochromatic $2$-connected subgraphs under this edge coloring $\gamma$.
\end{proof}

Now we consider the collection of $3$-connected graphs.   When $k=3$, Theorem \ref{Thm:Connectivity} implies that $\mathrm{gr}_{3}(P_5:\mathcal{F}^3)=\mathrm{R}_3(\mathcal{F}^3) \in [13, 20]$.  In the next theorem, we consider the cases where $k\ge 4$.

\begin{theorem}\label{th-F3}
For $k\geq 4$, we have $$\mathrm{gr}_{k}(P_5:\mathcal{F}^3) = \begin{cases}
7 & \text{if $k=4$},\\
5 & \text{if $k=5, 6$},\\
\ceil{\frac{1+\sqrt{1+8k}}{2}} & \text{if $k\geq 7$}.\\
\end{cases}$$
\end{theorem}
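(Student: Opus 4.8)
The plan is to split on the value of $k$, just as in the statement, and --- as in the proof of Theorem~\ref{Tm:l=2} --- to obtain each lower bound from an explicit critical coloring (or from Inequality~(\ref{basiclower})) and each upper bound from the rainbow-$P_5$-free structure theorem, Theorem~\ref{Thm:P_5}. One observation does most of the upper-bound work: if an exact $k$-coloring of $K_n$ with $k\ge 4$ is of type $(b)$, then exactness forces $|V^j|\ge 2$ for every $j\ge 2$ (color $j$ occurs and lives only inside $V^j$), and, since every edge joining two distinct parts is colored $1$, color $1$ then contains a complete multipartite graph with at least three parts of size at least $2$, hence a copy of $K_{2,2,2}$, which is $3$-connected. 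Likewise, type $(c)$ always produces a monochromatic $K_{n-1}\supseteq K_4$ once $n\ge 5$. Consequently, for $k\ge 5$ the upper bound is almost immediate, since types $(a),(d),(e),(f)$ of Theorem~\ref{Thm:P_5} use at most four colors (type $(a)$ at most three) and so cannot occur; the only truly special case is $(n,k)=(5,6)$, which is handled by Theorem~\ref{Thm:K_5}.

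For $k=4$, the lower bound $\mathrm{gr}_4(P_5:\mathcal{F}^3)\ge 7$ would come from the exact $4$-coloring of $K_6$ of type $(e)$ obtained by taking $V(K_6)=\{a,b,c,d,e,f\}$, $E^2=\{ab,cd\}$, $E^3=\{ac,bd\}$, $E^4=\{ad,bc\}$, and coloring $1$ every remaining edge (each of which meets $\{e,f\}$). Colors $2,3,4$ are matchings, and in color $1$ only $e$ and $f$ have degree at least $3$, so no color class contains a $3$-connected subgraph; and a rainbow $P_5$ would use exactly one color-$1$ edge, hence exactly one edge meeting $\{e,f\}$, which puts at most one of $e,f$ into the path as an endpoint and leaves a Hamilton path inside the $K_4$ on $\{a,b,c,d\}$, whose proper $3$-edge-coloring by perfect matchings gives every Hamilton path equal-colored end edges --- no rainbow $P_4$, hence no rainbow $P_5$. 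For the upper bound, take an exact $4$-coloring of $K_n$, $n\ge 7$, with no rainbow $P_5$; by Theorem~\ref{Thm:P_5} one of $(b),(c),(d),(e)$ holds ($(a)$ needs $k\le 3$, $(f)$ needs $n=5$). Type $(b)$ gives a monochromatic $K_{2,2,2}$; type $(c)$ a monochromatic $K_{n-1}\supseteq K_4$; type $(d)$ a monochromatic $K_{n-1}$ minus an edge, which is $(n-3)$-connected; type $(e)$ a monochromatic complete split graph $K_{n-4}\vee\overline{K_4}$, which is $(n-4)$-connected, and $n-4\ge 3$. In every case $K_n$ contains a monochromatic $3$-connected subgraph.

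For $k\in\{5,6\}$, the bound $\mathrm{gr}_k(P_5:\mathcal{F}^3)\ge 5$ is witnessed by any exact $k$-coloring of $K_4$, since a $3$-connected graph on at most four vertices is $K_4$ and no color class of $K_4$ has all six edges when $k\ge 5$, while $K_4$ is too small to contain a $P_5$. For the upper bound, let $\gamma$ be an exact $k$-coloring of $K_n$, $n\ge 5$, avoiding a rainbow $P_5$; if $(n,k)=(5,6)$ this contradicts Theorem~\ref{Thm:K_5}, and otherwise (since $(a),(d),(e),(f)$ are excluded for $k\ge 5$) type $(b)$ or $(c)$ holds and yields a monochromatic $3$-connected subgraph as above. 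For $k\ge 7$, the lower bound is Inequality~(\ref{basiclower}); for the upper bound, if $n\ge\ceil{\frac{1+\sqrt{1+8k}}{2}}$ then $\binom{n}{2}\ge k\ge 7$ forces $n\ge 5$, and again, assuming $\gamma$ avoids a rainbow $P_5$, only type $(b)$ or $(c)$ of Theorem~\ref{Thm:P_5} can hold, each giving a monochromatic $3$-connected subgraph.

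The one step that needs genuine care is the $k=4$ lower bound: one must exhibit a $4$-coloring of $K_6$ that is at once rainbow-$P_5$-free and free of monochromatic $3$-connected subgraphs, and the check of rainbow-$P_5$-freeness rests on the small fact about the end edges of Hamilton paths in a properly $3$-edge-colored $K_4$. It is worth noting that this coloring is essentially forced: in a $4$-colored $K_6$, types $(b)$, $(c)$, $(d)$ of Theorem~\ref{Thm:P_5} each already produce a monochromatic $3$-connected subgraph on six vertices, so a critical coloring for $\mathrm{gr}_4(P_5:\mathcal{F}^3)$ must be of type $(e)$. All remaining steps are routine bookkeeping with Theorems~\ref{Thm:P_5} and~\ref{Thm:K_5}, using the elementary $3$-connectivity of $K_m$ ($m\ge 4$), $K_m$ minus an edge ($m\ge 5$), complete multipartite graphs with at least three parts of size at least $2$, and complete split graphs $K_m\vee\overline{K_4}$ with $m\ge 3$.
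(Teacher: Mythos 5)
Your proposal is correct and follows essentially the same route as the paper: the same case split on $k$, the same lower-bound witnesses (the type-$(e)$ coloring of $K_6$ for $k=4$, exact colorings of $K_4$ for $k\in\{5,6\}$, and Inequality~(\ref{basiclower}) for $k\ge 7$), and the same use of Theorem~\ref{Thm:P_5} together with Theorem~\ref{Thm:K_5} for the upper bounds. Your write-up is in fact slightly more careful than the paper's (the explicit rainbow-$P_5$-freeness check via end edges of Hamilton paths in $K_4$, and the unified $K_{2,2,2}$ observation for type $(b)$), but the underlying argument is the same.
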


\begin{proof} Consider the following cases.

\underline{Case 1:} Assume that $k=4$.
Let $\gamma$ be the $4$-edge-coloring of $K_6$ with vertex set $V(K_6)=\{v_1,v_2,\dots , v_6\}$ such that $\gamma(v_1v_2)=\gamma(v_3v_4)=2$, $\gamma(v_1v_3)=\gamma(v_2v_4)=3$, $\gamma(v_1v_4)=\gamma(v_2v_3)=4$, and $\gamma(v_1v_5)=\gamma(v_1v_6)=\gamma(v_2v_5)=
\gamma(v_2v_6)=\gamma(v_3v_5)=\gamma(v_3v_6)=
\gamma(v_4v_5)=\gamma(v_4v_6)=\gamma(v_5v_6)=1$.
It is easy to check that under the coloring $\gamma$, there is neither a rainbow copy of $P_5$ nor a monochromatic $3$-connected subgraph, which means that $\mathrm{gr}_{4}(P_5:\mathcal{F}^3)\geq 7$.  To prove that $\mathrm{gr}_{4}(P_5:\mathcal{F}^3)\leq 7$, consider an exact $4$-edge-coloring of $K_n$ ($n\geq 7$) that avoids a rainbow copy of $P_5$.  It follows from Theorem \ref{Thm:P_5} that $(b)$, $(c)$, $(d)$, or $(e)$ holds. If $(b)$ holds, then $\min\{|V^2|,|V^3|,|V^4|\}\geq 2$. Since all edges between $V^i$ to $V^j$ ($2\leq i\neq j\leq 4$) are colored $1$, and hence there is a monochromatic $3$-connected subgraph with color $1$.  If $(c)$ holds, then $K_n$ contains a monochromatic $K_5$, which means $K_n$ has a monochromatic $3$-connected subgraph.  If $(d)$ or $(e)$ hold, we can find a $K_3\vee\overline{K}_2$ or a $K_3\vee\overline{K}_4$ in color $1$, respectively.  It follows that $\mathrm{gr}_{4}(P_5:\mathcal{F}^3)\leq 7$.

\underline{Case 2:} Assume that $k\in \{5, 6\}$.  Every exact $5$ or $6$-edge-coloring of $K_4$ has at most two colors in any given color class, so a monochromatic $3$-connected subgraph is avoided.  There also are not enough vertices to have a rainbow $P_5$.  Thus, $\mathrm{gr}_{k}(P_5:\mathcal{F}^3)\geq 5$ when $k\in \{5,6\}$.
To prove that $\mathrm{gr}_{k}(P_5:\mathcal{F}^3)\leq 5$ when $k\in \{5,6\}$, consider an exact $6$-edge-coloring of $K_n$ ($n\geq 5$) that avoids a rainbow copy of $P_5$.  From Theorem \ref{Thm:P_5}, either $(b)$ or $(c)$ holds.  For $n\geq 6$, if $(b)$ holds, then $\min\{|V^2|,|V^3|,|V^4|,|V^5|\}\geq 2$. Since all edges between $V^i$ and $V^j$ ($2\leq i\neq j\leq 5$) are colored $1$, there is a monochromatic $3$-connected subgraph with color $1$.  If $(c)$ holds, then $K_n$ contains a monochromatic $K_4$, which means that $K_n$ has a monochromatic $3$-connected subgraph. For $n=5$, it follows from Theorem \ref{Thm:K_5} that any $6$-edge colored $K_5$ contains a rainbow copy of $P_5$, a contradiction. Hence, we have $\mathrm{gr}_{6}(P_5:\mathcal{F}^3)\leq 5$.

\underline{Case 3:} Assume that $k\geq 7$.  From Inequality (\ref{basiclower}), it follows that $\mathrm{gr}_{k}(P_5:\mathcal{F}^3) \geq \ceil{\frac{1+\sqrt{1+8k}}{2}}$.
Now we prove $\mathrm{gr}_{k}(P_5:\mathcal{F}^3) \leq \ceil{\frac{1+\sqrt{1+8k}}{2}}$.
Let $G$ be an exact $k$-edge-colored $K_n$, where $n\geq \ceil{\frac{1+\sqrt{1+8c}}{2}}$, that avoids a rainbow $P_5$.  From Theorem \ref{Thm:P_5}, we have either $(b)$ or $(c)$ holds.  With similar analysis as in the proof of the previous theorem, we can conclude that $G$ always contains a monochromatic $3$-connected subgraph.
\end{proof}

Next, we consider $4$-connected graphs.  When $k=3$, Theorem \ref{Thm:Connectivity} implies that $\mathrm{gr}_{3}(P_5:\mathcal{F}^4)=R_3(F^4) \in [19, 30]$.  In the next theorem, we consider the value of $\mathrm{gr}_{k}(P_5:\mathcal{F}^4)$ when $k\ge 4$.

\begin{theorem}
For $k\geq 4$, we have
$$
\mathrm{gr}_{k}(P_5:\mathcal{F}^4) = \begin{cases}
8 & \text{if $k=4$},\\
6 & \text{if $k=5$},\\
5 & \text{if $k=6$},\\
\ceil{\frac{1+\sqrt{1+8k}}{2}} & \text{if $k\geq 7$}.\\
\end{cases}
$$
\end{theorem}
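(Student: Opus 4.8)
The plan is to split on $k$ exactly as in the proofs of Theorems~\ref{Tm:l=2} and~\ref{th-F3}, handling $k=4$, $k=5$, $k=6$, and $k\ge 7$ in turn. In each case the lower bound comes either from Inequality~(\ref{basiclower}) (for $k\ge 7$, where it already reads $\ge\ceil{\frac{1+\sqrt{1+8k}}{2}}$) or from an explicit critical colouring (for $k\in\{4,5,6\}$), while the upper bound is obtained by assuming an exact $k$-colouring of $K_n$ with no rainbow $P_5$, applying the structural Theorem~\ref{Thm:P_5}, and checking that each surviving configuration forces a monochromatic $4$-connected subgraph. Since an exact $k$-colouring with $k\ge 4$ uses at least four colours, configuration $(a)$ is never available; configurations $(d)$, $(e)$, $(f)$ each use only four colours (and $(f)$ also forces $n=5$), so they are relevant only when $k=4$.

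For the lower bounds: when $k=6$, any bijective $6$-edge-colouring of $K_4$ has every colour class a single edge and fewer than five vertices, so it avoids both a rainbow $P_5$ and a monochromatic $4$-connected subgraph, giving $\ge 5$. When $k=5$, I would colour $K_5$ by making the $K_4$ on $\{v_1,v_2,v_3,v_4\}$ entirely colour $1$ and giving $v_5v_i$ colour $i+1$ for $1\le i\le 4$; every Hamiltonian path of $K_5$ uses at most two of the four edges at $v_5$, hence at least two colour-$1$ edges, so there is no rainbow $P_5$, while colour $1$ only spans a $K_4$ and the other classes are single edges, so no monochromatic $4$-connected subgraph occurs, which gives $\ge 6$. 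When $k=4$, I would use $K_7$: put the standard $1$-factorization $3$-edge-colouring on $\{v_1,v_2,v_3,v_4\}$ (the three perfect matchings receive colours $2,3,4$) and colour every edge meeting $\{v_5,v_6,v_7\}$ with colour $1$, so that the colour-$1$ subgraph is $K_3\vee\overline{K}_4$. Colours $2,3,4$ then live inside a set of four vertices, and any three edges of a $P_5$ lying inside a $4$-vertex set must be consecutive, hence form a Hamiltonian path of that $K_4$ whose two end-edges repeat a colour, so no rainbow $P_5$ exists; and since the four vertices of the $\overline{K}_4$ part have colour-$1$ degree $3$, every subgraph of colour $1$ on at least five vertices has minimum degree at most $3$ and is not $4$-connected. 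This gives $\ge 8$. Checking exactness and these two properties is the only computation in the lower bounds.

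For the upper bounds, fix an exact $k$-colouring of $K_n$ with no rainbow $P_5$ and $n$ at least the asserted value, and apply Theorem~\ref{Thm:P_5}. If $k=6$ and $n=5$, then Theorem~\ref{Thm:K_5} already yields a rainbow $P_5$. Otherwise only $(b)$ or $(c)$ can hold when $k\ge 5$, and only $(b)$, $(c)$, $(d)$, $(e)$ when $k=4$. In case $(c)$, $K_{n-1}$ is monochromatic, and $n-1\ge 5$ throughout the relevant ranges, so it contains a monochromatic $K_5$, which is $4$-connected. In case $(b)$, each colour $j$ with $2\le j\le k$ forces $|V^j|\ge 2$, so two vertices taken from each of $V^2,V^3,V^4$ span a monochromatic octahedron $K_{2,2,2}$ in colour $1$, which is $4$-connected (for $k\ge 5$ one even gets $K_{2,2,2,2}$). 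For $k=4$ in case $(d)$, the only non-colour-$1$ edges are incident to $a$ together with the single edge $bc$, so the $n-3\ge 5$ vertices outside $\{a,b,c\}$ induce a monochromatic $K_{n-3}\supseteq K_5$ in colour $1$; in case $(e)$, all non-colour-$1$ edges lie inside the $4$-vertex set $\{a,b,c,d\}$, so deleting two of those vertices leaves colour $1$ spanning $K_{n-2}$ minus at most one edge, which is $(n-4)$-connected and hence $4$-connected since $n\ge 8$ (equivalently, a monochromatic $K_4\vee\overline{K}_4$ or $K_4\vee\overline{K}_3$ appears in colour $1$). One bookkeeping remark: when $k\ge 7$ the asserted value $\ceil{\frac{1+\sqrt{1+8k}}{2}}$ is smaller than $k$, and configuration $(c)$ requires $n\ge k$ while $(b)$ requires $n\ge 2(k-1)$; so for $n$ below $k$ none of $(a)$--$(f)$ applies, which simply means there is no rainbow-$P_5$-free exact $k$-colouring of $K_n$ at all, i.e.\ every such colouring already contains a rainbow $P_5$.

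The part that needs the most care is $k=4$, where the sporadic configurations $(d)$ and $(e)$ genuinely occur and one must confirm that $n\ge 8$ is exactly the threshold at which colour $1$ is forced to carry a $4$-connected subgraph in those configurations --- in particular that the $K_7$ colouring above is critical, so that the value cannot be lowered to $7$. In every other regime Theorem~\ref{Thm:P_5} collapses at once to $(b)$ or $(c)$, and octahedra and near-complete graphs are comfortably $4$-connected, so those cases are routine.
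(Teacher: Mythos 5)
Your proposal is correct and follows essentially the same route as the paper: the identical case split on $k$, the same extremal colorings for $k\in\{4,5,6\}$ (your $K_7$ coloring is exactly the paper's $\gamma_1$ and your $K_5$ coloring is its $\gamma_2$ up to relabeling), and the same reduction via Theorem~\ref{Thm:P_5} to configurations $(b)$--$(e)$ yielding monochromatic octahedra or near-complete graphs. If anything, you are slightly more careful than the paper in the $k\ge 7$ regime, where you note explicitly that for $\ceil{\frac{1+\sqrt{1+8k}}{2}}\le n<k$ none of the configurations of Theorem~\ref{Thm:P_5} can occur, so a rainbow $P_5$ is forced outright.
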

\begin{proof}
Consider the following cases.

\underline{Case 1:} Assume that $k=4$.
Let $\gamma_1$ be the $4$-edge-coloring of $K_7$ with vertex set $V(K_7)=\{v_i:1\leq i\leq 7\}$ such that $\gamma_1(v_1v_2)=\gamma_1(v_3v_4)=2$, $\gamma_1(v_1v_3)=\gamma_1(v_2v_4)=3$, $\gamma_1(v_1v_4)=\gamma_1(v_2v_3)=4$ and all other edges are colored 1. It is easy to check that there is neither a rainbow copy of $P_5$ nor a monochromatic $4$-connected subgraph under the coloring $\gamma$, which means that $\mathrm{gr}_{4}(P_5:\mathcal{F}^4)\geq 8$. To prove that $\mathrm{gr}_{4}(P_5:\mathcal{F}^4)\leq 8$, consider an exact $4$-edge-coloring of $K_n$ ($n\geq 8$) that avoids arainbow copy of $P_5$. It follows from Theorem \ref{Thm:P_5} that $(b)$, $(c)$, $(d)$, or $(e)$ holds. If $(b)$ holds, then $\min\{|V^2|,|V^3|,|V^4|\}\geq 2$. Since all edges between $V^i$ and $V^j$ ($2\leq i\neq j\leq 4$) are colored $1$, and hence there is a monochromatic $4$-connected subgraph with color $1$. If $(c)$ holds, then $K_n$ contains a monochromatic $K_7$, which means that $K_n$ contains a monochromatic $4$-connected subgraph. If $(d)$ or $(e)$ holds, we can find $K_4\vee\overline{K}_2$ or $K_4\vee\overline{K}_4$ of color $1$ respectively. Therefore, $K_n$ contains a monochromatic $4$-connected subgraph of color $1$ and then $gr_{4}(P_5:\mathcal{F}^4)\leq 8$.

\underline{Case 2:} Assume that $k=5$.
Let $\gamma_2$ be the $5$-edge-coloring of $K_5$ with vertex set $V(K_5)=\{v_i:1\leq i\leq 5\}$ such that $\gamma_2(v_1v_2)=1$, $\gamma_2(v_1v_3)=2$, $\gamma_2(v_1v_4)=3$, $\gamma_2(v_1v_5)=4$ and all other edges are colored $5$. It is easy to check that there is neither a rainbow copy of $P_5$ nor a monochromatic $4$-connected subgraph under the coloring $\gamma$, which means that $gr_{5}(P_5:\mathcal{F}^4)\geq 6$. To prove that$\mathrm{gr}_{5}(P_5:\mathcal{F}^4)\leq 6$, consider an exact $5$-edge-coloring of $K_n$ ($n\geq 6$) that avoids a rainbow copy of $P_5$.  It follows from Theorem \ref{Thm:P_5} that $(b)$ or $(c)$ holds. If $(c)$ holds, then $G$ contains a monochromatic $K_5$, which means that $G$ contains a monochromatic $4$-connected subgraph. If $(b)$ holds, then $\min\{|V^2|,|V^3|,|V^4|,|V^5|\}\geq 2$. Since all edges between $V^i$ and $V^j$ ($2\leq i\neq j\leq 5$) are colored $1$, there is a monochromatic $4$-connected subgraph with color $1$. Hence, $\mathrm{gr}_{5}(P_5:\mathcal{F}^4)\leq 6$.

\underline{Case 3:} Assume that $k=6$.  In an exact $6$-edge-coloring of $K_4$, every edge receives its own color.   So, no monochromatic $4$-connected subgraph exists and there are not enough vertices to have a rainbow $P_5$.  It follows that $\mathrm{gr}_6(P_5 : \mathcal{F}^4)\ge 5$.  To prove that $\mathrm{gr}_{k}(P_5:\mathcal{F}^4)\leq 5$, let $H$ be an exact $6$-edge-colored $K_n$ ($n\geq 5$) that lacks a rainbow copy of $P_5$.  From Theorem \ref{Thm:P_5}, either $(b)$ or $(c)$ holds.  For $n\geq 6$, if $(b)$ holds, then $\min\{|V^2|,|V^3|,|V^4|,|V^5|,|V^6|\}\geq 2$. Since all edges between $V^i$ and $V^j$ ($2\leq i\neq j\leq 6$) are colored $1$, there is a monochromatic $4$-connected subgraph with color $1$.  If $(c)$ holds, then $H$ contains a monochromatic $K_5$, which means that $H$ has a monochromatic $4$-connected subgraph. For $n=5$, it follows from Theorem \ref{Thm:K_5} that any exact $6$-edge -coloring of $K_5$ contains a rainbow copy of $P_5$, giving a contradiction.  Hence, we have $\mathrm{gr}_{6}(P_5:\mathcal{F}^3)\leq 5$.

\underline{Case 4:} Assume that $k\geq 7$.
By Inequality (\ref{basiclower}), we have that $\mathrm{gr}_{k}(P_5:\mathcal{F}^4) \geq \ceil{\frac{1+\sqrt{1+8k}}{2}}$.
To prove that $\mathrm{gr}_{k}(P_5:\mathcal{F}^4) \leq \ceil{\frac{1+\sqrt{1+8k}}{2}}$, let $G$ be an exact $k$-edge-colored $K_n$,  where $n\geq \ceil{\frac{1+\sqrt{1+8k}}{2}}$, without a rainbow copy of $P_5$.  We know from Theorem \ref{Thm:P_5} that $(b)$ or $(c)$ holds. With a similar analysis to that of the proof of the previous theorem, we come to the conclusion that $G$ contains a monochromatic $4$-connected subgraph.
\end{proof}

Now we consider the values of $\mathrm{gr}_k(P_5:\mathcal{F}^\ell)$when $\ell\geq 5$.  When $k=3$, ${\rm gr}_3(P_5:\mathcal{F}^\ell)=R_3(\mathcal{F}^\ell) \in [6\ell-5,10\ell-10]$ by Theorem \ref{Thm:Connectivity}.  The following theorem considers the values of ${\rm gr}_k(P_5:\mathcal{F}^\ell)$ when $k\ge 4$ and $\ell \ge 4$.

\begin{theorem}\label{Tm:k=l}  Suppose that $\ell \ge 5$.
If $4\le k\le \ceil{\frac{\ell}{2}}+1$, then $$2\ell\le {\rm gr}_k(P_5:\mathcal{F}^\ell)\le (3+\sqrt{11/3})(\ell-1).$$  If $k\ge \ceil{\frac{\ell}{2}}+2$, then
$$
{\rm gr_{k}}(P_5:F^\ell) = \begin{cases}
 \ell+2 & \text{if $\ceil{\frac{\ell}{2}} +2\leq k\leq \ell+1$},\\
\ceil{\frac{1+\sqrt{1+8k}}{2}} & \text{if $k\geq\ell+2$}.
\end{cases}
$$
\end{theorem}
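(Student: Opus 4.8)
The plan is to organise the argument by the size of $k$ relative to $\ell$ and, throughout, to feed the colouring into the Thomason--Wagner dichotomy (Theorem \ref{Thm:P_5}): $k\geq 4$ kills case $(a)$, $n\geq 6$ kills case $(f)$, and $k\geq 5$ (which holds whenever $k\geq\ceil{\frac{\ell}{2}}+2$, since $\ell\geq 5$) kills $(d),(e)$ as well. Two elementary facts will be used repeatedly. First, in case $(b)$ the colour-$1$ graph contains $K_{|A|}\vee K_{|V^2|,|V^3|,\dots}$, so if at least two special parts are non-empty its connectivity is at least $n-\max_j|V^j|$. Second, $\kappa(G\vee H)=\min(|V(G)|+\kappa(H),|V(H)|+\kappa(G))$.

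For $\ceil{\frac{\ell}{2}}+2\leq k\leq\ell+1$, the lower bound $\ell+2$ is realised on $K_{\ell+1}$ by colour $1$ on a $K_\ell$ together with the $\ell$ edges at the last vertex coloured by $2,\dots,k$ (possible since $k-1\leq\ell$): this is of type $(c)$, hence rainbow-$P_5$-free, and no colour class is $\ell$-connected (a colour-$1$ graph is a subgraph of $K_\ell$, the rest are stars). For the upper bound, case $(c)$ yields a monochromatic $K_{\ell+1}$, while in case $(b)$ there are at least $\ceil{\frac{\ell}{2}}+1$ non-empty special parts of size $\geq 2$, so $\max_j|V^j|\leq n-2(k-2)$ and the colour-$1$ graph is $\ell$-connected (its connectivity is $\geq 2(k-2)\geq\ell$). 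For $k\geq\ell+2$ one instead uses that, by Theorem \ref{Thm:P_5}, a rainbow-$P_5$-free colouring of $K_n$ ($n\geq 5$) uses at most $n$ colours (case $(c)$ being extremal); writing $n_0:=\ceil{\frac{1+\sqrt{1+8k}}{2}}$, the defining minimality gives $k>\binom{n_0-1}{2}\geq n_0$ for $n_0\geq 5$ (which holds since $k\geq\ell+2\geq 7$), so every exact $k$-colouring of $K_{n_0}$ already contains a rainbow $P_5$, and with (\ref{basiclower}) the value equals $n_0$.

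For $4\leq k\leq\ceil{\frac{\ell}{2}}+1$, the lower bound $\mathrm{gr}_k(P_5:\mathcal{F}^\ell)\geq 2\ell$ is realised on $K_{2\ell-1}$ by a type-$(b)$ colouring with $|V^2|=\ell$ (its internal edges in colour $2$), parts $V^3,\dots,V^k$ of size $2$ (each monochromatic in its own colour), and $|A|=\ell+3-2k\geq 0$ — the last inequality being exactly $2(k-2)\leq\ell-1$, equivalent to $k\leq\ceil{\frac{\ell}{2}}+1$. Then colours $\geq 2$ give $K_\ell$ or $K_2$ (no $\ell$-connected subgraph), and the colour-$1$ graph $K_{|A|}\vee K_{\ell,2,\dots,2}$ has every vertex of its size-$\ell$ part of degree $|A|+2(k-2)=\ell-1$; deleting those $\ell$ vertices leaves a graph on $\ell-1$ vertices, so colour $1$ is $(\ell-1)$-degenerate and contains no $\ell$-connected subgraph, while being of type $(b)$ it is rainbow-$P_5$-free.

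The upper bound $\mathrm{gr}_k(P_5:\mathcal{F}^\ell)\leq(3+\sqrt{11/3})(\ell-1)$ in this last range is the crux. Fix $n\geq(3+\sqrt{11/3})(\ell-1)$ and a rainbow-$P_5$-free exact $k$-colouring; cases $(c),(d),(e)$ each leave a monochromatic $K_{n-c}$ with $c\leq 4$, which is $\ell$-connected since $\ell\geq 5$. In case $(b)$, let $V^{j_1}$ be a largest special part and $m=|V^{j_1}|$. If $m\leq n-\ell$, the colour-$1$ graph is $\ell$-connected by the first fact above. If $m\geq n-\ell+1$, then $W=V\setminus V^{j_1}$ has $w=|W|\leq\ell-1$ with $w\geq 2(k-2)\geq 4$, and $V^{j_1}$ carries a genuine $2$-colouring with colour-$1$ graph $R$ and colour-$j_1$ graph $B=\overline{R}$ (the other colours live in parts of size $\leq\ell-1$, contributing no $\ell$-connected subgraph). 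Since all $W$--$V^{j_1}$ edges are colour $1$ and the colour-$1$ graph $G_W$ on $W$ is at least $2$-connected, the join identity applied to $G_W\vee R'$ (over induced subgraphs $R'$ of $R$) shows the colour-$1$ graph is $\ell$-connected unless $R$ has no $(\ell-w)$-connected subgraph of order $\geq\ell-2$; if $R$ has no such subgraph then $\kappa(R)<\ell-w$, and a smallest separation of $R$ produces in $B$ a complete bipartite graph with parts $R_1,R_2$ satisfying $|R_1|+|R_2|\geq n-\ell+1$, neither side of which can reach $\ell$ vertices (else $B\supseteq K_{\ell,\ell}$), so the larger side has $\geq n-2(\ell-1)$ vertices, carries the same obstruction, and one iterates. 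It remains to show that this, together with Matula's bound (Theorem \ref{Thm:Connectivity}) applied to $V^{j_1}$ — which alone only gives $m\leq\flr{(3+\sqrt{11/3})(\ell-1)}$ — is incompatible with $m\geq n-w$ and $n\geq(3+\sqrt{11/3})(\ell-1)$. This amounts to an asymmetric, weighted strengthening of Theorem \ref{Thm:Connectivity} (the $w$ colour-$1$ vertices joined to $V^{j_1}$ should lower its admissible size by roughly $w$), which should follow from a weighted variant of the double-counting argument behind Theorem \ref{Thm:Connectivity}; I expect this to be the main obstacle, the remainder being routine use of Theorem \ref{Thm:P_5} and the two facts of the first paragraph.
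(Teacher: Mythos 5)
The decisive step is missing. For $4\le k\le \ceil{\frac{\ell}{2}}+1$, in case $(b)$ with a dominant part $V^{j_1}$ of size $m\ge n-\ell+1$, your argument dissolves into an iteration you yourself cannot close: you end by invoking an ``asymmetric, weighted strengthening of Theorem \ref{Thm:Connectivity}'' that you only ``expect'' to hold. That is exactly the hard point of the theorem, and it is also unnecessary. The paper's proof sidesteps it with a one-line reduction: recolor, for each $i\ge 3$, every edge of color $i$ inside $V^{i}$ with color $2$. The result is a $2$-colored $K_n$ with $n\ge \ceil{(3+\sqrt{11/3})(\ell-1)}\ge \mathrm{R}_2(\mathcal{F}^\ell)$, so Theorem \ref{Thm:Connectivity} yields a monochromatic $\ell$-connected subgraph $H$ of the recolored graph. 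If $H$ carries color $1$ it existed already, since color $1$ is untouched; if $H$ carries the merged color $2$ then, being connected, it lies inside a single part $V^{j}$, where all of its edges have the original color $j$. No weighted or asymmetric variant of Matula's theorem is needed, and without supplying one your proof of the upper bound in this range is incomplete.

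Two further points. First, your treatment of $k\ge \ell+2$ only shows that $K_{n_0}$, with $n_0=\ceil{\frac{1+\sqrt{1+8k}}{2}}$, forces a rainbow $P_5$ because $k>n_0$; but the upper bound must be verified for every $n\ge n_0$ (this is how the paper, and inequality (\ref{basiclower}), interpret the definition), and once $n\ge k$ rainbow-$P_5$-free exact $k$-colorings of types $(b)$ and $(c)$ do exist. For these you still must produce a monochromatic $\ell$-connected subgraph: in case $(c)$ the monochromatic $K_{n-1}$ has $n-1\ge k-1\ge \ell+1$ vertices, and in case $(b)$ two vertices chosen from each of $V^2,\dots,V^k$ span a color-$1$ complete multipartite graph of connectivity $2(k-1)-2\ge 2\ell$. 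Second, your lower-bound constructions in both ranges and your case-$(b)$ argument for $\ceil{\frac{\ell}{2}}+2\le k\le \ell+1$ are correct and essentially identical to the paper's.
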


\begin{proof} Consider the following cases.

\underline{Case 1:} Assume that $4\le k\le \ceil{\frac{\ell}{2}}+1$.
For the lower bound, partition the vertex set $V(K_{2\ell-1})$ into $k-1$ sets $V_2, \cdots, V_k$ such that $|V_2|=\ell$, $\min\{|V_3|,|V_4|,\cdots,|V_k|\}\geq 2$, and $|V_3|+|V_4|+\cdots+|V_k|=\ell-1$.  Color the edges within $V_i$ ($2\leq i\leq k$) with color $i$, and for each pair $V_i$, $V_j$ ($2\leq i\neq j\leq k$), color all of the edges between $V_i$ and $V_j$ with color $1$. Since the resulting exact $k$-edge-coloring of $K_{2\ell-1}$ avoids a rainbow copy of $P_5$ and a monochromatic $\ell$-connected subgraph, it follows that  $\mathrm{gr}_{k}(P_5:\mathcal{F}^\ell)\geq 2\ell$.

To prove the upper bound, consider an exact $k$-edge-coloring of $K_n$ ($n\geq (3+\sqrt{11/3})(\ell-1)\geq 6$) that avoids a rainbow $P_5$ and denote this coloring by $G$.
We know from Theorem \ref{Thm:P_5} that $(b)$, $(c)$, $(d)$, or $(e)$ holds. If $(b)$ holds, there is a partition of $V(G)$, say $V_1, V_2, \cdots,V_k$, where the edges among $V_i$ ($1\leq i\leq k$) are colored $1$ or $i$. For each pair $V_i$, $V_j$ ($1\leq i\neq j\leq k$), all edges between $V_i$ and $V_j$ receive the color $1$. Obviously, $|V_i|\geq 2$ ($2\leq i\leq k$). If we re-color all edges of color $i$ within parts $V_i$ ($3\leq i\leq k$) to color $2$, then all edges in $G$ will be colored only with color $1$ or $2$. Note that if there is a monochromatic $\ell$-connected subgraph after re-coloring, it must also exist before re-coloring. From Theorem \ref{Thm:Connectivity}, we have ${\rm R}_k(\mathcal{F}^\ell)\le (3+\sqrt{11/3})(\ell-1)$.  It follows that $G$ contains  a monochromatic $\ell$-connected subgraph after re-coloring.  If $(c)$, $(d)$, or $(e)$ holds, then $G$ contains a monochromatic $K_{(3+\sqrt{11/3})(\ell-1)-3}$, which means that $G$ contains a monochromatic $\ell$-connected subgraph. Hence, $\mathrm{gr}_{k}(P_5:\mathcal{F}^\ell)\leq (3+\sqrt{11/3})(\ell-1)$.

\underline{Case 2:} Assume that $\ceil{\frac{\ell}{2}} +2\leq k\leq \ell+1$.
For the lower bound, begin with a monochromatic $K_{\ell}$ in color $1$, introduce a vertex $x$, and arbitrarily color the edges joining $x$ to the $K_\ell$ using all of the colors $2,3,\dots , k$. This is possible since $k-1\le \ell$ in this case.  The resulting exact $k$-edge-coloring of $K_{\ell+1}$ avoids a rainbow $P_5$ and a monochromatic $\ell$-connected subgraph, from which it follows that $\mathrm{gr}_{k}(P_5:\mathcal{F}^\ell) \geq \ell+2$.

To prove that $\mathrm{gr}_{k}(P_5:\mathcal{F}^\ell) \leq \ell+2$, consider an exact $k$-edge-coloring of $K_n$ ($n\geq \ell+2$) that avoids a rainbow $P_5$, and denote this coloring by $G$.
Since $k\geq\ceil{\frac{\ell}{2}}+2\geq 5$, by Theorem \ref{Thm:P_5}, either $(b)$ or $(c)$ holds. If $(c)$ holds, then $G$ contains a monochromatic $K_{\ell+1}$, which implies there is a monochromatic $\ell$-connected subgraph. If $(b)$ holds, without loss of generality, we suppose that $|V_2|\geq |V_3|\geq \cdots\geq |V_k|\geq 2$. Since $k\geq\ceil{\frac{\ell}{2}}+2$, we have $2(k-2)\geq \ell$ and then $|V_2|+|V_3|+\cdots +|V_{k}|\geq 2(k-1)\geq\ell+2$. For each $i\in \{2, 3, \ldots, k\}$, select two vertices $v_{1i},v_{2i}\in V_i$ and set $S=\bigcup_{i=2}^k\{v_{1i},v_{2i}\}$. It follows that $S\geq \ell+2$, and then the set $S$ induces a monochromatic $\ell$-connected subgraph in color $1$ in $G$.  It follows that $\mathrm{gr}_{k}(P_5:\mathcal{F}^\ell)\leq \ell+2$.

\underline{Case 3:} Assume that $k\geq\ell+2$.
From Inequality (\ref{basiclower}), it follows that $\mathrm{gr}_{k}(P_5:\mathcal{F}^\ell)\geq\ceil{\frac{1+\sqrt{1+8k}}{2}}$.
To prove that $\mathrm{gr}_{k}(P_5:\mathcal{F}^\ell)\leq\ceil{\frac{1+\sqrt{1+8k}}{2}}$, let $G$ be an exact $k$-edge-colored $K_n$, where $n\geq \ceil{\frac{1+\sqrt{1+8k}}{2}}$, that avoids a rainbow $P_5$. By Theorem \ref{Thm:P_5}, either $(b)$ or $(c)$ holds.  If $(c)$ holds, then $G$ contains a monochromatic $K_{n-1}$. Since $k\geq \ell+2$, we have $n-1\geq k-1 \geq \ell+1$, which means $G$ contains a monochromatic $\ell$-connected subgraph.  If $(b)$ holds, with loss of generality, we suppose that $|V_2|\geq |V_3|\geq\cdots\geq|V_k|\geq 2$. Since $k\geq \ell+2$, we have $|V_2|+|V_3|+\cdots +|V_{k}|\geq 2(k-2)\geq 2\ell$. For each $i\in \{2, 3, \ldots, k\}$, select two vertices $v_{1i},v_{2i}\in V_i$ and set $S=\bigcup_{i=2}^k\{v_{1i},v_{2i}\}$. It follows that $S\geq 2\ell$ and then the set $S$ induces a monochromatic $\ell$-connected subgraph with color $1$ in $G$. Hence, $\mathrm{gr}_{k}(P_5:\mathcal{F}^\ell)\leq\ceil{\frac{1+\sqrt{1+8k}}{2}}$.
\end{proof}

In the next theorem, we consider the Gallai-Ramsey numbers $\mathrm{gr}_{k}(P_5:\mathcal{F}_n^{\ell})$, where $\mathcal{F}_n^{\ell}$ is the subset of $\mathcal{F}^\ell$ consisting of all $\ell$-connected subgraphs of order $n$.  Note that $${\rm gr}_k(P_5:\mathcal{F}^\ell_n)\ge {\rm gr}_k(P_5:\mathcal{F}^\ell),$$ for all $n\ge \ell+1\ge 3$.

\begin{theorem}
For $n\geq \ell+3$, $2\leq \ell\leq 2k-4$, and $n \geq k\geq 5$, $$ \mathrm{gr}_{k}(P_5:\mathcal{F}_n^{\ell}) =n+\ell-1.$$
\end{theorem}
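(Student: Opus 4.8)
The plan is to prove the two matching inequalities $n+\ell-1\le\mathrm{gr}_k(P_5:\mathcal F_n^\ell)\le n+\ell-1$. Throughout I use that a monochromatic member of $\mathcal F_n^\ell$ is exactly an $n$-set $S$ of vertices together with a colour $c$ such that the colour-$c$ edges inside $S$ induce an $\ell$-connected graph; since $n\ge\ell+3$, a monochromatic $K_n$ always qualifies, as $\kappa(K_n)=n-1\ge\ell$. Also, since $k\ge 5$, every rainbow-$P_5$-free exact $k$-edge-colouring of a complete graph on at least five vertices is of type $(b)$ or type $(c)$ in Theorem~\ref{Thm:P_5}.

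\textbf{Upper bound.} Let $\gamma$ be an exact rainbow-$P_5$-free $k$-colouring of $K_{n+\ell-1}$. If $\gamma$ is of type $(c)$, there is a monochromatic $K_{n+\ell-2}$, and $n+\ell-2\ge n$ gives a monochromatic $K_n$. If $\gamma$ is of type $(b)$ with parts $A,V^2,\dots,V^k$, then $|V^j|\ge2$ for $2\le j\le k$ since all $k$ colours occur. First, no part has $n$ or more vertices: otherwise the other $k-2$ parts, of total size $\ge 2(k-2)$, must fit in the remaining $\le\ell-1$ vertices, forcing $\ell\ge 2k-3$, contrary to $\ell\le 2k-4$. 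Next I claim one can choose $S$ with $|S|=n$ and $|S\cap V^j|\le n-\ell$ for all $j$; this fails only if $\sum_j(|V^j|-(n-\ell))^+\ge\ell$, and if $q\ge1$ of the parts exceed $n-\ell$ then
\[
n+\ell-1=|A|+\sum_j|V^j|\;\ge\;q(n-\ell)+\sum_j\bigl(|V^j|-(n-\ell)\bigr)^+ +2(k-1-q),
\]
so that middle sum being $\ge\ell$ would give $n+1-2k\ge q(n-\ell-2)\ge n-\ell-2$ (using $n\ge\ell+3$ and $q\ge1$), i.e.\ $\ell\ge2k-3$, again impossible. Fix such an $S$. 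In colour $1$ all edges between distinct blocks $V^j\cap S$ and all edges at $A\cap S$ are present, so the colour-$1$ graph on $S$ contains the complete multipartite graph whose blocks are the nonempty $V^j\cap S$ together with the singletons $\{a\}$, $a\in A\cap S$; its largest block has size $\le n-\ell$, so this graph on $n$ vertices is $\ell$-connected, finishing the upper bound.

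\textbf{Lower bound.} It suffices to produce an exact $k$-colouring of $K_{n+\ell-2}$ with no rainbow $P_5$ and no monochromatic member of $\mathcal F_n^\ell$. The plan is to use a type-$(b)$ pattern: partition $V(K_{n+\ell-2})$ into $A,V^2,\dots,V^k$ with every $|V^j|\ge2$, colour each $V^j$ internally entirely in colour $j$, and colour every remaining edge (cross-edges and edges meeting $A$) in colour $1$. Any such colouring is automatically rainbow-$P_5$-free, and keeping each $|V^j|\le n-1$ makes colours $2,\dots,k$ unable to host an $n$-vertex subgraph. The substance is to choose the sizes $|A|,|V^2|,\dots,|V^k|$ — this is where $n\ge k$, $n\ge\ell+3$ and $\ell\le 2k-4$ must all be used — so that the colour-$1$ graph (a clique on $A$ joined to a complete multipartite graph on the $V^j$'s) contains no $\ell$-connected graph on $n$ vertices, equivalently so that every $n$-subset meets some single block in at least $n-\ell+1$ vertices. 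When $\ell=2$ a cleaner type-$(c)$ colouring of $K_n$ works instead: a monochromatic $K_{n-1}$ in colour $1$ together with one extra vertex whose $n-1$ incident edges realise the colours $2,\dots,k$ with at most one colour-$1$ edge among them (possible since $n\ge k$).

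\textbf{Main obstacle.} The upper bound is essentially bookkeeping once Theorem~\ref{Thm:P_5} is invoked. The real difficulty is the lower bound: in a type-$(b)$ colouring the colour-$1$ graph is inevitably dense, and I expect the hard step to be showing that the sizes of the one ``large'' block and the $k-1$ obligatory blocks of size $\ge2$ can be chosen — under the precise hypotheses $n\ge\ell+3$, $n\ge k$ and $\ell\le 2k-4$ — so that colour $1$ still admits no $\ell$-connected subgraph on exactly $n$ vertices while all $k$ colours remain present. Balancing these competing demands (enough large parts to absorb vertices versus enough parts to carry the colours versus keeping colour $1$ thin) is the crux of the argument.
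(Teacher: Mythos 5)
Your upper bound is correct and complete, and is in fact more careful than the paper's own: the paper selects all of $V_3,\dots,V_k$ plus enough further vertices of $V_2$, which does not by itself guarantee that every block meets the chosen $n$-set in at most $n-\ell$ vertices, whereas your inequality $n+1-2k\ge q(n-\ell-2)$ does exactly that. The genuine gap is the lower bound for $\ell\ge 3$, which you rightly identify as the crux but leave as a hoped-for choice of part sizes --- and that choice does not exist. Run your own counting on $K_{n+\ell-2}$ instead of $K_{n+\ell-1}$: a type-$(b)$ colouring avoids a colour-$1$ member of $\mathcal F_n^{\ell}$ only if one cannot delete the $\ell-2$ omitted vertices so as to bring every block down to size $n-\ell$, i.e.\ only if $\sum_j(|V^j|-(n-\ell))^+\ge \ell-1$; then
\[
n+\ell-2\;\ge\;q(n-\ell)+(\ell-1)+2(k-1-q)
\]
yields $n+1-2k\ge q(n-\ell-2)\ge n-\ell-2$, hence $\ell\ge 2k-3$, contradicting $\ell\le 2k-4$. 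Type $(c)$ also fails for $\ell\ge 3$, since a monochromatic $K_{n+\ell-3}$ already contains $K_n\in\mathcal F_n^{\ell}$. As $k\ge 5$ forces every rainbow-$P_5$-free exact colouring of $K_{n+\ell-2}$ to be of type $(b)$ or $(c)$, no valid lower-bound colouring exists when $\ell\ge 3$, so your strategy cannot be completed there no matter how the sizes are balanced.

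This is not merely a gap in your write-up: it exposes a problem with the statement itself. The paper's lower-bound witness is an explicit colouring of $K_{n+\ell-2}$ (two colour-$1$ cliques on $\{u_1,\dots,u_{n-1}\}$ and $\{v_1,\dots,v_{\ell-1}\}$, with the cross edges at $u_i$ carrying colour $i+1$ for $i\le k-2$ and colour $k$ otherwise), which is of neither type $(b)$ nor type $(c)$; consistent with the computation above, it in fact contains the rainbow path $u_1v_1u_2u_3v_2$ with colours $2,3,1,4$ whenever $\ell\ge 3$ and $k\ge 5$. So for $\ell\ge 3$ the value $n+\ell-1$ appears to be an overcount (the truth is at most $n+\ell-2$), and only the case $\ell=2$ of the theorem --- for which your type-$(c)$ construction on $K_n$ is sound and your upper bound closes the argument --- actually survives. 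You should either restrict your claim to $\ell=2$ or determine the correct value for $\ell\ge 3$ rather than try to force the proposed construction through.
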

\begin{proof}
Let $V(K_{n+\ell-2})=\{u_i\mid 1\leq i\leq n-1\}\bigcup \{v_j \mid 1\leq j\leq \ell-1\}$ and let $G$ be a $k$-edge-colored $K_{n+\ell-2}$ such that $E^1=\{u_iu_l \mid 1\leq i\neq l\leq n-1\}\bigcup \{v_jv_t \mid 1\leq j\neq t\leq \ell-1\}$, $E^2=\{u_1v_j \mid 1\leq j\leq \ell-1\}$, $E^3=\{u_2v_j \mid 1\leq j\leq \ell-1\}$, $\cdot\cdot\cdot$, $E^{k-1}=\{u_{k-2}v_j \mid 1\leq j\leq \ell-1\}$ and $E^k=\bigcup_{j=1}^{\ell-1}\bigcup_{i=k-1}^{n-1}\{u_{i}v_j\}$. It is easy to verify that $G$ lacks a rainbow $P_5$ and a monochromatic $\ell$-connected subgraph of order $n$ since $n\geq \ell+3$.  Hence, $\mathrm{gr}_{k}(P_5:\mathcal{F}_n^{\ell})\geq n+\ell-1$.

Now we show that $\mathrm{gr}_{k}(P_5:\mathcal{F}_n^{\ell})\leq n+\ell-1$.
Suppose that $G$ be an exact $k$-edge-colored $K_m$ $(m\geq n+\ell-1)$ that avoids a rainbow $P_5$. According to Theorem \ref{Thm:P_5}, either $(b)$ or $(c)$ holds. If $(c)$ holds, then $G$ contains a monochromatic $K_{n+\ell-2}$, which means that $G$ contains a monochromatic member of $\mathcal{F}_n^{\ell}$. If $(b)$ holds, without loss of generality, we suppose that $|V_2|\geq |V_3|\geq \cdots\geq |V_k|\geq 2$.
Note that $|V_3|+\cdots +|V_k|\geq 2(k-2) \geq \ell$. For each $i\in \{3, \ldots, k\}$, select two vertices $\{v_{1i},v_{2i}\}\subseteq \{V_i\}$. Then, arbitrarily choose $n-(|V_3|+\cdots +|V_k|)$ vertices from $V_i$ ($2\leq i\leq k$), ensuring all $n$ vertices are distinct. These $n$ vertices form an $\ell$-connected subgraph in color $1$. So $G$ contains a monochromatic copy of $\mathcal{F}_n^{\ell}$ with color $1$. Hence, $\mathrm{gr}_{k}(P_5:\mathcal{F}_n ^\ell)\leq n+\ell-1$.
\end{proof}

\section{Rainbow \bm{$K_{1,3}$}-Free Edge-Colorings}\label{Sec3}

In this section, we prove the following theorem concerning Gallai-Ramsey numbers of the form ${\rm gr}_k(K_{1,3}:\mathcal{F}^\ell)$.

\begin{theorem}\label{K13}
Suppose that $\ell \geq 3$.  If $k=3$, then $$4(\ell-2)+3\le {\rm gr}_3(K_{1,3}:\mathcal{F}^\ell)\le (4+\sqrt{11/3})(\ell-1).$$  If $4\leq k\leq \lceil \frac{\ell}{2} \rceil +1$, then $$2\ell\le {\rm gr}_k(K_{1,3}:\mathcal{F}^\ell)\le (3+\sqrt{11/3})(\ell-1).$$  If $k\geq\lceil\frac{\ell}{2} \rceil +2$, then $${\rm gr}_k(K_{1,3}:\mathcal{F}^\ell)=\ceil{\frac{1+\sqrt{1+8k}}{2}}.$$
\end{theorem}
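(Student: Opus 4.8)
The theorem splits into three regimes according to the value of $k$, and in each regime I would prove a lower bound by exhibiting an explicit rainbow $K_{1,3}$-free $k$-edge-coloring with no monochromatic $\ell$-connected subgraph, and an upper bound by invoking the structure theorem for rainbow $K_{1,3}$-free colorings (Theorem \ref{th-Star-Structure}). The whole argument parallels the $P_5$ analysis in Section \ref{Sec2}, with the key difference that the relevant structural dichotomy now comes from Theorem \ref{th-Star-Structure}: for $k=3$ either $K_n$ has at most $3$ vertices or $n\le 3$ (case $(i)$, vacuous for the ranges we care about) or $G\cong G_1(n)$ (case $(ii)$), while for $k\ge 4$ we are thrown into case $(b)$ of Theorem \ref{Thm:P_5}, i.e. color $1$ is dominant. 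Thus for $k\ge 4$ the upper-bound arguments are essentially verbatim copies of Cases 1--3 of Theorem \ref{Tm:k=l}: in the dominant-color picture the parts $V^2,\dots,V^k$ satisfy $|V^i|\ge 2$ (otherwise fewer colors are used), all edges between distinct parts are color $1$, and one re-colors the inside-of-part edges to reduce to two colors and then applies Theorem \ref{Thm:Connectivity} (for $4\le k\le\lceil\ell/2\rceil+1$), or one simply picks two vertices from each of the $k-1$ nontrivial parts to build a monochromatic complete multipartite graph of color $1$ that is $\ell$-connected once it has at least $2\ell$ (or $\ell+2$) vertices (for $k\ge\lceil\ell/2\rceil+2$, using $2(k-2)\ge\ell$ exactly as in Case 3 there). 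The matching lower bounds $2\ell$ and $\ceil{(1+\sqrt{1+8k})/2}$ are the ones already constructed in Theorem \ref{Tm:k=l} (the multipartite colorings and Inequality (\ref{basiclower}))---one only needs to check that those colorings are also rainbow $K_{1,3}$-free, which is immediate since at every vertex at most two colors appear on incident edges; and for $k\ge\ell+2$ one checks the lower-order cases against case $(i)$/$(ii)$ of Theorem \ref{th-Star-Structure} to see the formula is not overtaken.

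The genuinely new piece is the $k=3$ regime, which is where I expect to spend most of the effort. For the upper bound ${\rm gr}_3(K_{1,3}:\mathcal{F}^\ell)\le (4+\sqrt{11/3})(\ell-1)$, I would take an exact $3$-coloring of $K_n$ with $n$ at least this bound and no rainbow $K_{1,3}$; by Theorem \ref{th-Star-Structure} it is a $G_1(n)$, so $V(K_n)=V_1\cup V_2\cup V_3$ with the prescribed color pattern. At least two parts are nonempty; by pigeonhole one part, say $V_{i+1}$, has at least $n/3$ vertices, and the edges inside $V_{i+1}$ use only colors $i$ and $i+1$. Restricting to $V_{i+1}$ I get a $2$-colored complete graph on $\ge n/3$ vertices, and Theorem \ref{Thm:Connectivity} gives ${\rm R}_2(\mathcal{F}^\ell)< (3+\sqrt{11/3})(\ell-1)+1$, so as long as $n/3\ge (3+\sqrt{11/3})(\ell-1)+1$, equivalently $n\ge 3(3+\sqrt{11/3})(\ell-1)+3$, there is a monochromatic $\ell$-connected subgraph. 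The stated bound $(4+\sqrt{11/3})(\ell-1)$ is smaller than $3(3+\sqrt{11/3})(\ell-1)$, so a crude ``largest part'' argument is not enough---the main obstacle is to squeeze the constant down by also exploiting the cross-edges. The refinement I would use: if two parts $V_{i}$ and $V_{i+1}$ are each reasonably large, then all $|V_i|\cdot|V_{i+1}|$ edges between them are a single color $i$, giving a monochromatic complete bipartite graph $K_{|V_i|,|V_{i+1}|}$, which is $\min(|V_i|,|V_{i+1}|)$-connected; so if the second-largest part already has $\ge \ell$ vertices we are done immediately. Hence the only surviving case is when one part, say $V_2$, has $\ge n-(\ell-1)$ vertices (so $V_1$ and $V_3$ together have $<\ell$), and then inside $V_2$ we have a $2$-coloring (colors $1$ and $2$) of $K_{n-\ell+1}$; applying ${\rm R}_2(\mathcal{F}^\ell)<(3+\sqrt{11/3})(\ell-1)+1$ needs $n-\ell+1\ge (3+\sqrt{11/3})(\ell-1)+1$, i.e. $n\ge (4+\sqrt{11/3})(\ell-1)$, which is exactly the claimed bound. (One should double-check the boundary when $V_3=\emptyset$ reduces $G_1(n)$ to a $2$-coloring of the whole $K_n$, where Theorem \ref{Thm:Connectivity} applies directly and gives an even better bound.)

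For the $k=3$ lower bound ${\rm gr}_3(K_{1,3}:\mathcal{F}^\ell)\ge 4(\ell-2)+3$, I would build a $G_1(n)$ on $n=4(\ell-2)+2 = 4\ell-6$ vertices that has no monochromatic $\ell$-connected subgraph: take $|V_1|=\ell-1$, $|V_2|=\ell-1$, $|V_3|=2(\ell-2)$, and inside each part split the internal edges between the two allowed colors using the extremal critical coloring for ${\rm R}_2(\mathcal{F}^\ell)\ge 4(\ell-1)+1$ from Theorem \ref{Thm:Connectivity} (i.e. inside the big part $V_3$, whose internal edges use colors $2$ and $3$, place the two-colored $4(\ell-1)$-free construction scaled to $2(\ell-2)\le 4(\ell-1)$ vertices). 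Then for each color $c$ one must verify the color-$c$ subgraph has a cut set of size $<\ell$: the color-$c$ edges live among at most two parts plus one bipartite bundle, and choosing the shared part of size $\ell-1$ (or a vertex cut of the critical two-coloring inside a part) as the separator disconnects it. The bookkeeping of exactly which edges get which color and exhibiting the separator for each of the three colors is the fiddliest part; I would present it as a single explicit construction followed by three short verifications, and I would reconcile the part sizes so the total is $4\ell-6$ and each monochromatic graph is genuinely non-$\ell$-connected (in particular the bipartite bundle between the two size-$(\ell-1)$ parts has color determined by the $G_1$ pattern and is only $(\ell-1)$-connected, which is fine). The main obstacle overall is thus the $k=3$ case, and within it the tight constant in the upper bound, handled by the ``second-largest part vs. bipartite bundle'' dichotomy above; the $k\ge 4$ cases should follow the Section \ref{Sec2} template with only cosmetic changes.
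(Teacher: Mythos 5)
Your treatment of the cases $k\ge 4$ matches the paper's and is fine, but both halves of your $k=3$ argument have genuine gaps. For the upper bound, your reduction loses a factor you cannot afford: from ``the second-largest part of the $G_1(n)$ partition has at most $\ell-1$ vertices'' you conclude that the largest part has at least $n-(\ell-1)$ vertices, but the correct conclusion is only $n-2(\ell-1)$, since \emph{two} parts may each have $\ell-1$ vertices. Running your argument (apply ${\rm R}_2(\mathcal{F}^\ell)$ inside the largest part) then requires $n-2(\ell-1)\ge (3+\sqrt{11/3})(\ell-1)+1$, i.e.\ $n\ge (5+\sqrt{11/3})(\ell-1)+1$, which is strictly worse than the claimed $(4+\sqrt{11/3})(\ell-1)$. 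The paper closes this gap by discarding only the \emph{smallest} part $V_3$ (at most $\ell-1$ vertices) and working in $H[V_1\cup V_2]$, which carries three colors but in which colors $2$ and $3$ occupy vertex-disjoint regions (color $3$ only inside $V_1$, color $2$ only inside $V_2$); merging them into one color gives a $2$-coloring of $K_{|V_1|+|V_2|}$ with $|V_1|+|V_2|\ge (3+\sqrt{11/3})(\ell-1)$, and any monochromatic $\ell$-connected subgraph of the merged coloring, being connected, lies entirely in $V_1$ or in $V_2$ and hence was already monochromatic. You need this ``keep two parts and recolor'' step to reach the stated constant.

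Your $k=3$ lower-bound construction also fails. With $|V_1|=|V_2|=\ell-1$ and $|V_3|=2(\ell-2)$, the color-$2$ graph contains $\overline{K_{\ell-1}}\vee F_2$, where $F_2$ is the color-$2$ graph inside $V_3$: every vertex of $V_2$ is joined to all of $V_3$ by color $2$. If $F_2$ has a connected component on $m\ge \ell$ vertices, then the join of $V_2$ with that component is $\min(m,\,\ell-1+1)=\ell$-connected, so every component of $F_2$ must have fewer than $\ell$ vertices; the same applies to the color-$3$ graph $F_3$ inside $V_3$ (joined to $V_1$). But $F_2\cup F_3=K_{2\ell-4}$, and if all components of $F_2$ have at most $\ell-1<2\ell-4$ vertices then $F_2$ has at least two components, so $F_3\supseteq \overline{F_2}$ contains a spanning connected complete multipartite graph on $2\ell-4\ge \ell$ vertices --- a contradiction (the case $\ell=3$ fails by direct inspection as well). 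No choice of internal colorings rescues these part sizes. The paper's construction is structurally different: two special vertices $v_1,v_2$ with the single color-$3$ edge $v_1v_2$, all other edges at $v_1$ colored $1$, all other edges at $v_2$ colored $2$, and a $2$-coloring of the remaining $K_{4(\ell-2)}$ with colors $1$ and $2$ avoiding monochromatic $(\ell-1)$-connected subgraphs (possible since ${\rm R}_2(\mathcal{F}^{\ell-1})\ge 4(\ell-2)+1$); a monochromatic $\ell$-connected subgraph in color $1$ would have to remain $(\ell-1)$-connected after deleting $v_1$, contradicting the choice of the $2$-coloring. You should replace your construction with one of this type.
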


\begin{proof} Consider the following cases.

\underline{Case 1:} Assume that $k=3$.
Let $G$ be the exact $3$-edge-colored $K_{4(\ell-2)+2}$ with vertex set $\{v_i: 1\leq i\leq 4(\ell-2)+2\}$ and with edges colored as follows. Color $v_1v_2$ with color $3$, all other edges incident to $v_1$ (resp. $v_2$) with color $1$ (respectively, color $2$), and all other edges with colors $1$ or $2$.  According to Theorem \ref{Thm:Connectivity}, $G-\{v_1,v_2\}$ contains neither an $(\ell-1)$-connected subgraph with color $1$ nor an $(\ell-1)$-connected subgraph with color $2$. Therefore, $G$ contains neither a rainbow $K_{1,3}$ nor a monochromatic $\ell$-connected subgraph, which means that $\mathrm{gr}_{k}(K_{1,3}:\mathcal{F}^\ell)\geq 4(\ell-2)+3$.

Suppose that $H$ is an exact $3$-edge-colored $K_n$ ($n\geq (4+\sqrt{11/3})(\ell-1)$) that lacks a rainbow copy of $K_{1,3}$. We know from Theorem \ref{th-Star-Structure} that $(ii)$ is true. Without loss of generality, we may assume that $|V_1|\geq |V_2|\geq |V_3|$. Suppose that $|V_3|\leq |V_2|\leq \ell-1$. Otherwise, $|V_1|\geq |V_2|\geq\ell$ and then the induced subgraph $H[V_1\cup V_2]$ contains a monochromatic $K_{\ell,\ell}$ with color $1$, which is a monochromatic $\ell$-connected subgraph.  It follows from $|V_3|\leq \ell-1$ that $|V_1|+|V_2|\geq (3+\sqrt{11/3})(\ell-1)$. Replace the color $2$ edges within $H[V_2]$ to color $3$ and then after the replacement, $H\setminus V_3$ contains a monochromatic $\ell$-connected subgraph by Theorem \ref{Thm:Connectivity}. As a result, $H\setminus V_3$ contains a monochromatic $\ell$-connected subgraph before the replacement. Therefore, $H$ always contains a monochromatic $\ell$-connected subgraph and thus $\mathrm{gr}_{k}(K_{1,3}:\mathcal{F}^\ell)\leq (4+\sqrt{11/3})(\ell-1)$.

\underline{Case 2:} Assume that $4\leq k\leq \lceil \frac{\ell}{2} \rceil +1$.
To prove the lower bound, partition the vertex set $V(K_{2\ell-1})$ into $k-1$ sets $V_2, \cdots, V_k$ such that $|V_2|=\ell$, $\min\{|V_3|,|V_4|,\cdots,|V_k|\}\geq 2$, and $|V_3|+|V_4|+\cdots+|V_k|=\ell-1$. Color the edges within $V_i$ ($2\leq i\leq k$) with color $i$, and for each pair $V_i$, $V_j$ ($2\leq i\neq j\leq k$), color all of the edges between $V_i$ and $V_j$ with color $1$. Since the resulting exact $k$-edge-coloring of $K_{2\ell-1}$ avoids a rainbow copy of $K_{1,3}$ and a monochromatic $\ell$-connected subgraph, it follows that  $\mathrm{gr}_{k}(K_{1,3}:\mathcal{F}^\ell)\geq 2\ell$.

To prove the upper bound, let $G$ be any $k$-edge-colored $K_n$ ($n\geq (3+\sqrt{11/3})(\ell-1)> 6$) that avoids a rainbow $K_{1,3}$. By Theorem \ref{th-Star-Structure}(iii), there is a partition of $V(G)$, say $V_1, V_2, \cdots, V_k$, where $V_1$ may be empty and $\min\{|V_2|,\dots,|V_k|\}\geq 2$. The edges among $V_i$ ($1\leq i\leq k$) are colored $1$ or $i$, and for each pair $V_i$, $V_j$ ($1\leq i\neq j\leq k$), all the edges between $V_i$ and $V_j$ receive the color $1$. If we re-color all edges of color $i$ within parts $V_i$ ($3\leq i\leq k$) to color $2$, then the resulting graph, denoted by $G'$, contains a monochromatic $\ell$-connected subgraph since ${\rm R}_2(F^\ell)<(3+\sqrt{11/3})(\ell-1)+1$. It follows that $G$ also contains a monochromatic $\ell$-connected subgraph and thus $\mathrm{gr}_{k}(K_{1,3}:\mathcal{F}^\ell)\leq(3+\sqrt{11/3})(\ell-1)$.

\underline{Case 3:} Assume that $k\geq\lceil\frac{\ell}{2} \rceil +2$.
From Inequality (\ref{basiclower}), we have that $\mathrm{gr}_{k}(K_{1,3}:\mathcal{F}^\ell) \geq \ceil{\frac{1+\sqrt{1+8k}}{2}}$.
To prove that $\mathrm{gr}_{k}(K_{1,3}:\mathcal{F}^\ell) \leq \ceil{\frac{1+\sqrt{1+8k}}{2}}$, let $G$ be an exact $k$-edge-colored $K_n$, where $n\geq \ceil{\frac{1+\sqrt{1+8k}}{2}}$, that avoids a rainbow $K_{1,3}$.
By Theorem \ref{th-Star-Structure} (iii), there is a partition of $V(G)$, say $V_1, V_2, \cdots, V_k$, where $V_1$ may be empty and $\min\{|V_2|,\dots,|V_k|\}\geq 2$. The edges among $V_i$ ($1\leq i\leq k$) are colored $1$ or $i$, and for each pair $V_i$, $V_j$ ($1\leq i\neq j\leq k$), all the edges between $V_i$ and $V_j$ receive the color $1$.
Therefore, we have $\mathrm{gr}_{k}(K_{1,3}:\mathcal{F}^\ell) =\ceil{\frac{1+\sqrt{1+8k}}{2}}$.
\end{proof}

\end{document}